\documentclass[12pt]{article}
\usepackage[english]{babel}
\usepackage{amsthm,amsfonts, amsbsy, amssymb,amsmath,graphicx}
\usepackage{graphics}
\usepackage[all]{xy}

\newtheorem{theorem}{Theorem}
\newtheorem{lemma}{Lemma}
\newtheorem{corollary}{Corollary}

\newtheorem{proposition}{Proposition}
\theoremstyle{definition}
\newtheorem{definition}{Definition}
\newtheorem{example}{Example}
\theoremstyle{remark}
\newtheorem{remark}{Remark}

\def\Z{{\mathbb Z}}

\def\R{{\mathbb R}}

\date{}

\author{Igor Nikonov}
\title{On universal parity on free two-dimensional knots}
\date{}

\sloppy

\begin{document}

\maketitle

\begin{abstract}
We prove that the Gaussian parity on free two-dimensional knots is universal.
\end{abstract}

\section*{Introduction}

A parity is a rule to assign labels $0$ and $1$ to the crossings of knot diagrams in a way compatible with
Reidemeister moves (Fig.~\ref{fig:parity_axioms}). It was introduced by V.O. Manturov~\cite{M3} in his researches concerning free knots.

\begin{figure}[h]
\centering\includegraphics[width=0.7\textwidth]{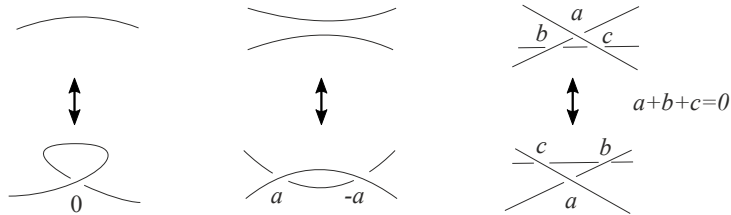}
\caption{Parity axioms}\label{fig:parity_axioms}
\end{figure}

An example of such labeling gives the {\em link parity} which is defined on diagrams of $2$-component links. A crossing is even in link parity if the both arcs belong to one component (of a two-component link), and is odd if the arcs belong to different components (Fig.~\ref{fig:gaussian_link_parity} right).

\begin{figure}[h]
\centering\includegraphics[height=0.2\textwidth]{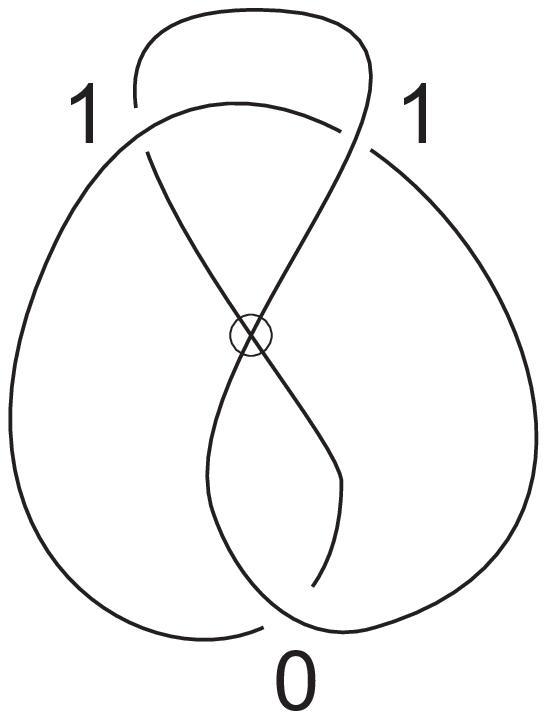}\qquad \includegraphics[height=0.2\textwidth]{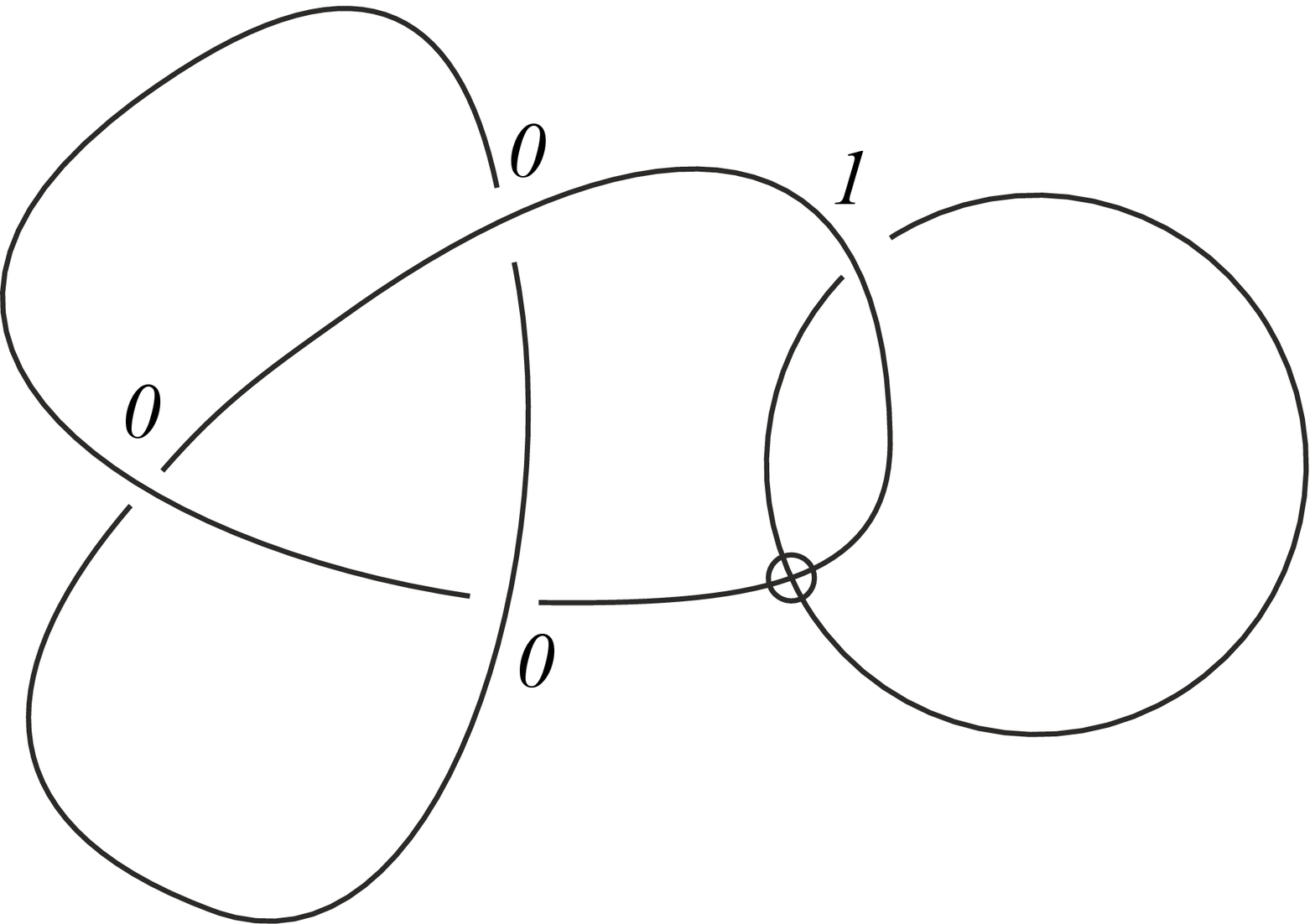}
\caption{Gaussian (left) and link (right) parities}\label{fig:gaussian_link_parity}
\end{figure}

Another example is {\em Gaussian parity} defined on diagrams of virtual knots. The Gaussian parity of a crossing of a virtual knot diagram is the parity of the number of (classical) crossings that one encounters while moving along the knot from the chosen crossing back to it (Fig.~\ref{fig:gaussian_link_parity} left).

The notion of parity has found various applications in knot
theory. It allows to strengthen knot invariants, to prove minimality
theorem and to construct (counter)examples~\cite{IMN, IMN1, M1,M2,M3,M4,M5,M6,M7}.

In~\cite{FM} D.A. Fedoseev and V.O. Manturov generalized the notion of parity to $2$-knots and showed how parity can be used to refine $2$-knot invariants.

The aim of the present paper is to find how much information can be extracted from parities. We show that any parity on diagrams of free $2$-knots (i.e. $2$-knots considered up to crossing switch and virtualization) is reduced to a concrete $\Z_2$-index --- the Gaussian parity.

The paper is organized as follows. In the next section we remind the definition of $2$-knots, Gauss diagrams of $2$-knots and free $2$-knots. In Section~\ref{sec:parity} we give the definition of a parity on $2$-knots with coefficients in an abelian group. In Section~\ref{sec:main_result} we prove that the Gaussian parity is universal for free $2$-knots. In the last section we introduce oriented parities.

\section{Parities of $2$-knots}\label{sec:2-knots}

Let us remind the definition of classical and abstract $2$-knots~\cite{Ros1,Ros2,Winter}.

\begin{definition}
A {\em $2$-knot} is a smooth embedding $S^2\hookrightarrow\R^4$ considered up to isotopy.
\end{definition}

\begin{example}
Spun construction is one of methods to construct $2$-knots. Given a knotted curve in $\R^3$ with ends on a fixed axis, a $2$-knot is the result of rotation of the curve around the axis which is considered as an axis in $\R^4\supset\R^3$.

\begin{figure}[h]
\centering\includegraphics[width=0.15\textwidth]{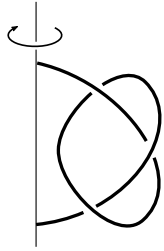}
\caption{Spun trefoil}
\end{figure}
\end{example}

Like the usual knots, $2$-knots can be presented by diagrams

\begin{definition}
A {\em diagram} $D$ of a $2$-knot is a projection in general position of the $2$-knot
$$\xymatrix{S^2 \ar[dr]_D \ar@{^{(}->}[r] & \R^4 \ar[d]  \\ & \R^3}$$
The vertical arrow is a linear projection from $\R^4$ to $\R^3$.
\end{definition}

Locally, the points of a $2$-diagram $D$ belongs to one of the four types: regular point, double point, triple point and pinch point (Fig.~\ref{fig:singularities}).
\begin{figure}[h]
\centering\includegraphics[width=0.6\textwidth]{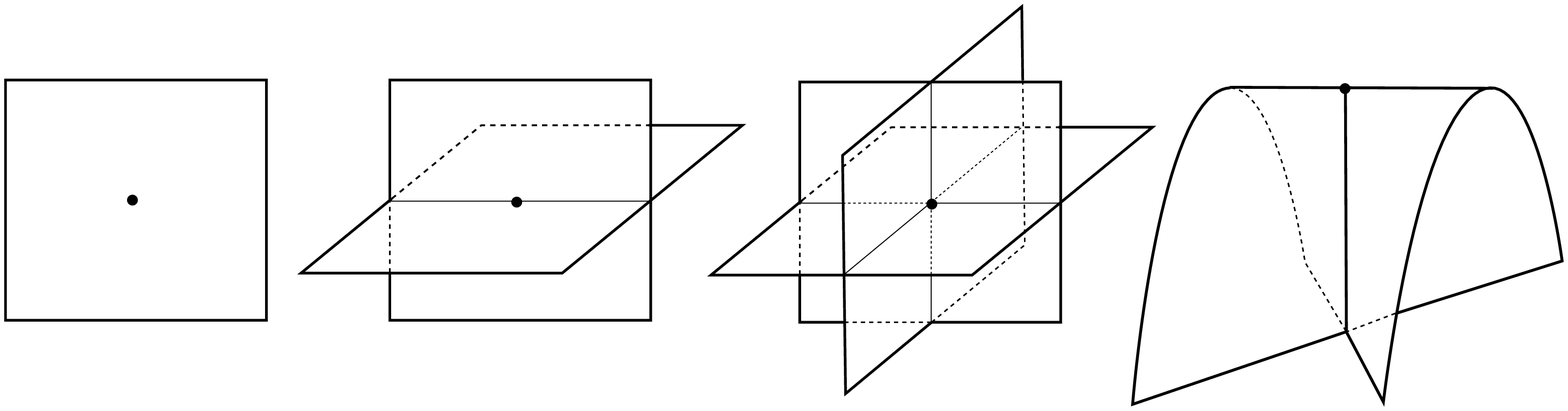}
\caption{Types of diagram points}\label{fig:singularities}
\end{figure}

Like crossings of usual knots, double lines of $2$-knot diagrams have under/overcrossing structure and oriented. Undersheets are depicted with a gap at the double line. The orientation is induced by the orientations of the embedded sphere and $\R^3$ (Fig.~\ref{fig:double_line_orientation}): the normal to the overcrossing sheet $\vec{n}_+$, the normal to the under-crossing sheet $\vec{n}_-$ and the tangent vector to the double line $\vec{v}$ must form a positive basis of $\R^3$.
\begin{figure}[h]
\centering\includegraphics[width=0.3\textwidth]{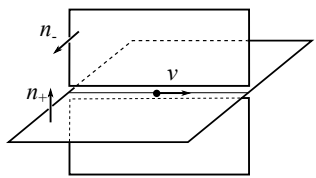}
\caption{Orientation of double lines: the triple $(\vec{n}_+,\vec{n}_-,\vec{v})$ have the positive orientation in $\R^3$}\label{fig:double_line_orientation}
\end{figure}

Reidemester moves on diagrams have their counterparts for $2$-knots.

\begin{theorem}[Roseman~\cite{Ros1}]
Two diagrams $D_1$ and $D_2$ correspond to isotopic $2$-knots if and only if they can be connected by a finite sequence of isotopies and moves $\mathcal{R}_1$--$\mathcal{R}_7$ (Fig.~\ref{fig:roseman_move}).
\begin{figure}[h]
\centering\includegraphics[width=0.8\textwidth]{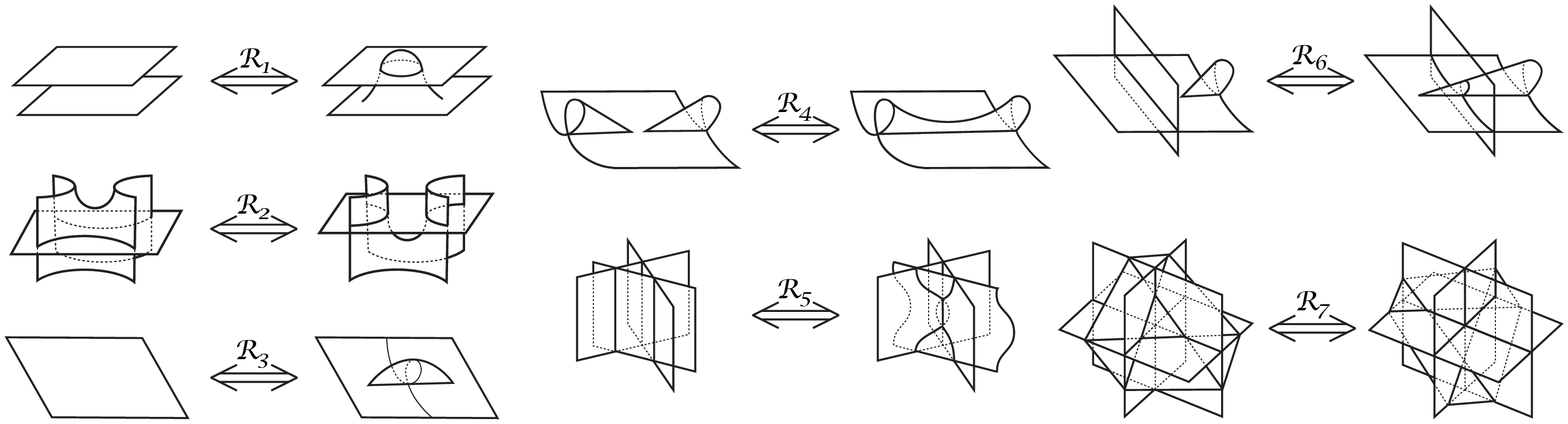}
\caption{Roseman moves}\label{fig:roseman_move}
\end{figure}
\end{theorem}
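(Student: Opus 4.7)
The plan is to mimic the classical proof of Reidemeister's theorem, but one dimension higher, using the multijet transversality machinery of Thom. The starting observation is that the statement has two directions, and the nontrivial one is ``only if'': if $D_1$ and $D_2$ present isotopic $2$-knots then they are related by $\mathcal{R}_1$--$\mathcal{R}_7$ and ambient isotopies in $\R^3$.

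First I would fix, for the two isotopic embeddings, a smooth $1$-parameter family of embeddings $F_t\colon S^2\hookrightarrow\R^4$, $t\in[0,1]$, and compose with the vertical linear projection $\pi\colon\R^4\to\R^3$ to obtain a family of maps $f_t=\pi\circ F_t\colon S^2\to\R^3$. The map $F\colon S^2\times[0,1]\to\R^4$ can be assumed to be smooth, and by an arbitrarily small perturbation (fixed at the endpoints) one may arrange that the associated family $f_t$ is \emph{generic} as a $1$-parameter family: for every $t$ outside a finite set $\{t_1,\dots,t_N\}\subset(0,1)$ the map $f_t$ has only the four codimension-$0$ local models listed in Fig.~\ref{fig:singularities} (regular, double, triple, and pinch point), while at each $t_i$ exactly one codimension-$1$ singularity of the multijet stratification is realized, transversally crossed by the path $t\mapsto f_t$. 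This is a standard multijet transversality argument applied to the jet bundle of maps $S^2\to\R^3$, analogous to the argument that reduces Reidemeister's theorem to a local codimension-$1$ analysis.

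The core of the proof is then the classification of codimension-$1$ multijet singularities for maps of surfaces to $3$-space. I would split them into two families. The \emph{monogerm} singularities come from degenerations of a single sheet: a Whitney umbrella can merge with or split from a fold line (move $\mathcal{R}_1$), a swallowtail-type tangency produces a birth/death of double arcs, and analogous pinch-point rearrangements yield the moves involving pinch points. The \emph{multigerm} singularities come from degenerations of how several sheets meet: two sheets becoming tangent along a curve (the $2$-knot analogue of the R$_2$ move), a double line passing transversally through a third sheet (tangency of a double curve with an overbranch, giving the analogue of R$_3$), and finally a quadruple point at which four sheets meet transversally at one point. Combined with the codimension-$1$ pinch-point/triple-point interactions, this list matches exactly the seven Roseman moves $\mathcal{R}_1$--$\mathcal{R}_7$; at each exceptional time $t_i$, the diagram $f_t$ before and after $t_i$ therefore differs by the corresponding move, while between consecutive $t_i$'s the diagrams are connected by an ambient isotopy of $\R^3$ induced by the smoothly varying generic projection.

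The main obstacle is precisely this classification step: one must verify completeness of the list of codimension-$1$ strata, i.e.\ that no exotic singularity has been missed, and that the local model of each stratum, after resolving the branches by the over/under data coming from the fourth coordinate, produces one of the pictured Roseman moves with the correct crossing information. I would expect to do this by combining the standard Thom--Boardman--Mather classification for the monogerms (which isolates Whitney umbrellas, cusps, and swallowtails) with a transversality analysis for the multigerms (pairs, triples, and quadruples of sheets in generic position), then lifting every model to $\R^4$ and checking that the under/over convention of Fig.~\ref{fig:double_line_orientation} is preserved. Once this is done, concatenating the local moves across $t_1<\dots<t_N$ yields the desired finite sequence of Roseman moves connecting $D_1$ and $D_2$.
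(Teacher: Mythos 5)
This theorem is not proved in the paper at all: it is Roseman's theorem, quoted with a citation to~\cite{Ros1}, so there is no internal argument to compare yours against. Your outline does follow the strategy of the actual proofs in the literature (Roseman's original argument, and the later treatments via generic projections and one-parameter families): perturb the isotopy $F_t$ so that $f_t=\pi\circ F_t$ is generic for all but finitely many $t$, and analyze the codimension-$1$ degenerations crossed at the exceptional times. So the plan is the right one.

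However, as written it is a plan and not a proof. The entire mathematical content of the theorem sits in the step you yourself flag as ``the main obstacle'': the complete classification of the codimension-$1$ multijet strata for maps of a surface into $\R^3$, the proof that a generic path meets only those strata and meets them transversally, and the verification that crossing each stratum changes the diagram by exactly one of the seven pictured moves (with the over/under data lifted from the fourth coordinate). None of this is carried out, and it cannot be waved through by citing Thom--Boardman--Mather: that classification concerns monogerm Thom--Boardman classes, while most of the Roseman moves ($\mathcal R_2$, $\mathcal R_5$, $\mathcal R_6$, $\mathcal R_7$) come from multigerm degenerations that require a separate transversality and normal-form analysis. Your naming of the monogerm models is also off --- swallowtails are not singularities of maps $S^2\to\R^3$; the relevant codimension-$1$ monogerm phenomena are births/deaths and collisions of cross-caps (pinch points) along double curves, which is what produces the branch-point moves. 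Two further steps are asserted without justification: that between consecutive exceptional times the diagrams are related by ambient isotopy of $\R^3$ (this needs an isotopy-extension argument for families of generic maps of constant topological type), and the ``if'' direction, i.e.\ that each move is realized by an isotopy in $\R^4$. So the proposal correctly identifies the architecture of Roseman's proof but does not supply the classification that constitutes it; to make it a proof you would have to either carry out that singularity-theoretic analysis or cite it, which is exactly what the paper does by referring to~\cite{Ros1}.
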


Gauss diagrams present an alternative way to define $1$-knots. Given a knot diagram, its Gauss diagram shows paired pre-images of double points of the diagram with orientation indicated (Fig.~\ref{fig:gauss_diagram}). Gauss diagrams encode virtual knots rather than classical knots.
\begin{figure}[h]
\centering\includegraphics[height=0.15\textwidth]{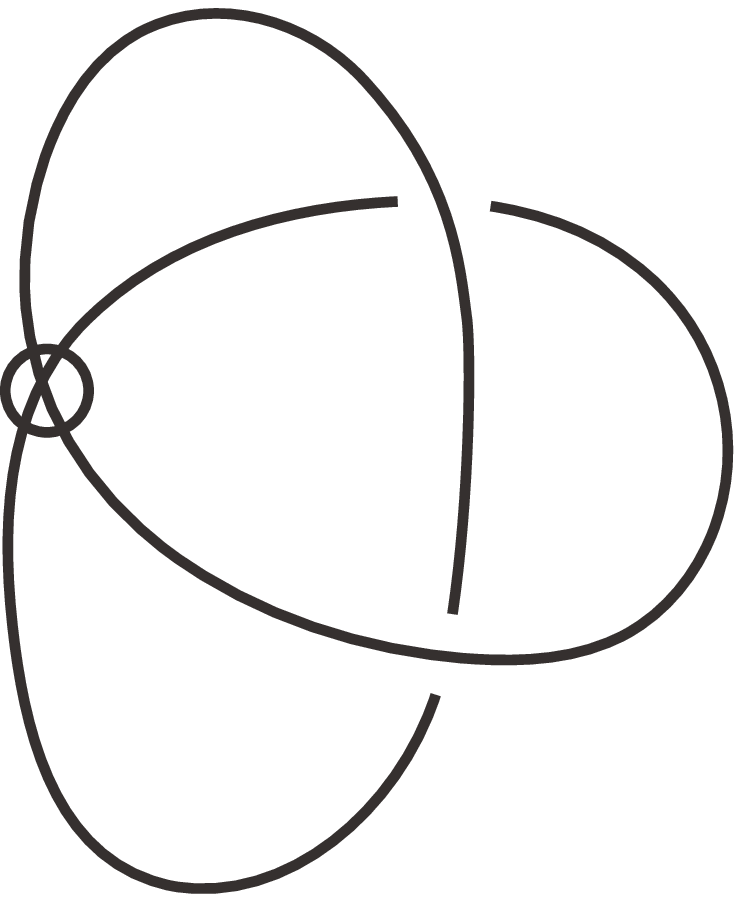}\qquad\includegraphics[height=0.15\textwidth]{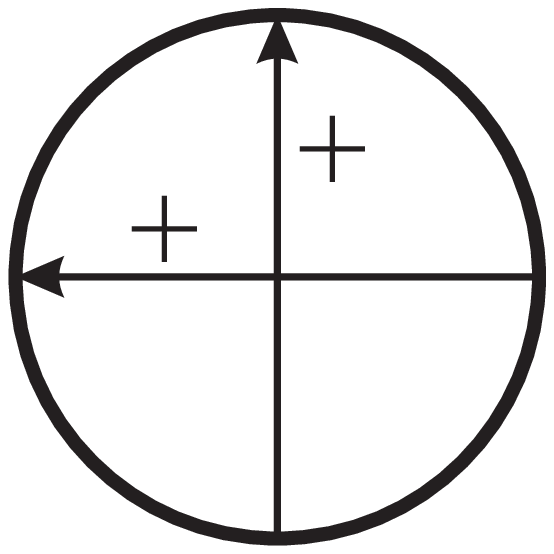}
\caption{Virtual trefoil and its Gauss diagram}\label{fig:gauss_diagram}
\end{figure}

Informally, Gauss diagram of a diagram $D$ of some $2$-knot is the pre-image of singular point of the diagram, equipped with orientations and  signs. This pre-image is the closure of the curves which maps to double points of $D$ by the mapping $S^2\to D$. The general properties of these curves lead to the following definition~\cite{FM,Winter}.

\begin{definition}
A {\em $2$-Gauss diagram} is a set $\Gamma$ of curves in the sphere $S^2$. The curves are oriented, marked with signs $\pm$ and obeying the following conditions:
\begin{enumerate}
\item Each curve is closed or terminates in a cusp.
\item  Each curve in $\Gamma$ is paired with a unique second curve in $\Gamma$, and one curve in
the pair is marked as {\em over} (+), the other is marked as {\em under} (-).
\item  Two curves that terminate in a single cusp are paired together, with the orientations
both pointing toward or both pointing away from the cusp.
\item  If two curves cross, then the curves they are paired with cross two curves which are paired (that is,
triple points appear three times on the sphere).
\end{enumerate}
\end{definition}

\begin{remark}\label{rem:geometrical_triple_point}
When a Gauss diagram appears as the preimage of double lines of a $2$-knot diagram, the signs and orientations of curves at any triple point are compatible in the following way:
\begin{itemize}
\item The marks of the crossing curves in the sheets are $\{+,+\}$, $\{+,-\}$, $\{-,-\}$.
\item The orientations of the pairs $(c_+,b_+)$, $(c_-,a_+)$ and $(b_-,a_-)$ coincide (see Fig.~\ref{fig:geometrical_triple_point}).
\end{itemize}

\begin{figure}[h]
\centering\includegraphics[width=0.7\textwidth]{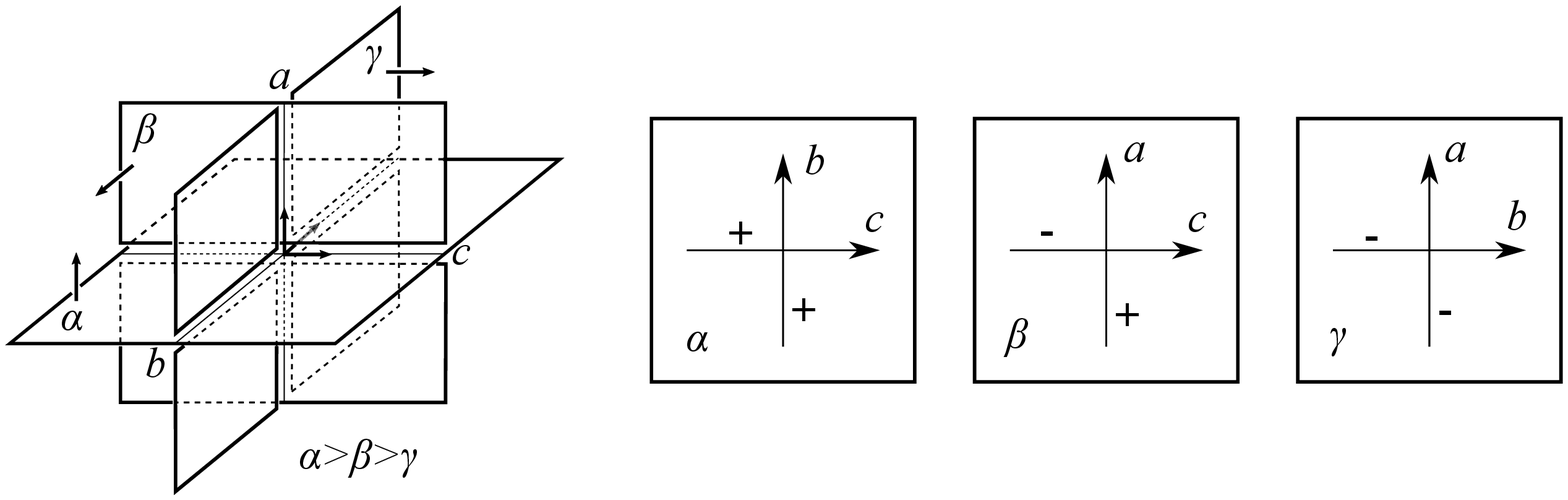}
\caption{Orientations at a geometrical triple point}\label{fig:geometrical_triple_point}
\end{figure}

We call such triple points {\em geometrical}.
\end{remark}

Let $\Gamma$ be a $2$-Gauss diagram. By gluing together the paired curves of $\Gamma$, one gets a factor space $D$ of the sphere $S^2$ called an {\em abstract $2$-knot diagram}. If the Gauss diagram comes from a diagram of a $2$-knot then $D$ is homeomorphic to this diagram. In general case, $D$ is a two dimensional complex for which there may be no embeddings into $\R^3$. But the definition ensures that locally $D$ looks like complexes in Fig.~\ref{fig:singularities}. Paired curves $a_+$ and $a_-$ of $\Gamma$ correspond to a line $a$ of double points in $D$.

Below we will use the abstract $2$-knot diagrams $D$ as counterparts of the Gauss diagrams $\Gamma$.

The Roseman moves on $2$-knot diagrams induce transformations of the $2$-Gauss diagrams (Fig.~\ref{fig:gauss_roseman_moves}).
\begin{figure}[h]
\centering\includegraphics[width=\textwidth]{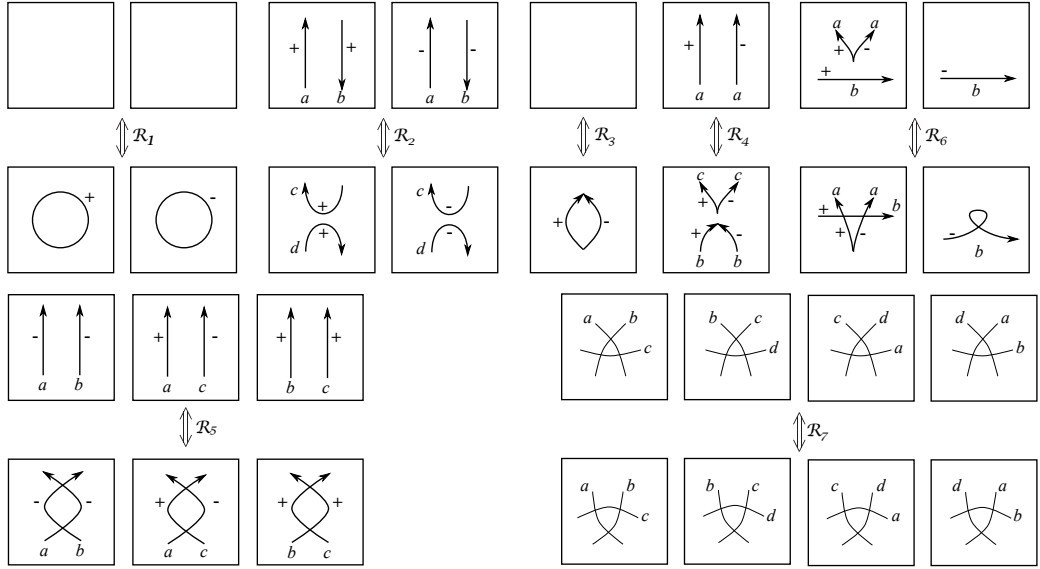}
\caption{Roseman moves on Gauss diagrams}\label{fig:gauss_roseman_moves}
\end{figure}

\begin{definition}
{\em Virtual $2$-knots} are equivalence classes of Gauss diagrams on the sphere modulo Roseman moves.
\end{definition}

We consider two operations on $2$-Gauss diagrams. A {\em crossing switch} interchanges the over and the undersheets at a double line. A {\em virtualization} changes the orientations of two paired curves in the diagram (Fig.~\ref{fig:switch_virtualization}).

\begin{figure}[h]
\centering\includegraphics[height=4cm]{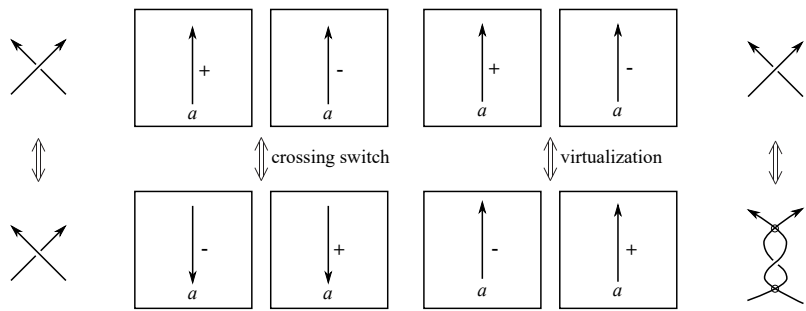}
\caption{Crossing switch and virtualization}\label{fig:switch_virtualization}
\end{figure}

Crossing switches and virtualizations can turn geometrical triple points into non geometrical ones.

\begin{definition}
{\em Flat $2$-knots} are virtual $2$-knots modulo crossing switches. {\em Free $2$-knots} are virtual $2$-knots modulo crossing switches and virtualizations.
\end{definition}

\section{Parity on $2$-knots}\label{sec:parity}

Let us define the main notion of the paper. Our definition combines the definition of a parity for $2$-knots given in~\cite{FM} and the definition of a parity with coefficients of ~\cite{IMN1}.

\begin{definition}\label{def:parity}

Let $\mathcal K$ be a virtual $2$-knot. A {\em parity} with coefficients in an abelian group $A$ on the diagrams of a $2$-knot $\mathcal K$ is a family of maps $p_D\colon \mathcal D(D)\to A$, where $\mathcal D(D)$ is the set of double lines of an abstract diagram $D$ of the knot $\mathcal K$, which obeys the following properties.
\begin{enumerate}
\item ({\em local correspondence property}) let diagrams $D$ and $D'$ be connected by a Roseman move. If double lines $a\in\mathcal D(D)$ and $a'\in\mathcal D(D')$ contain arcs which do not affected by the move and correspond to each other then $p_D(a)=p_{D'}(a')$.
\item ({\em triple point property}) if double lines $a,b,c\in\mathcal D(D)$ intersect in a triple point of the diagram $D$ then $p_D(a)+p_D(b)+p_D(c)=0$.
\item ({\em pinch property}) the parity the double line $a$ of any pinch is zero: $p_D(a)=0$. A {\em pinch} is the pair of two curves which appears after a Roseman move $\mathcal R_3$ (Fig.~\ref{fig:pinch}).
\end{enumerate}

\begin{figure}[h]
\centering\includegraphics[height=2cm]{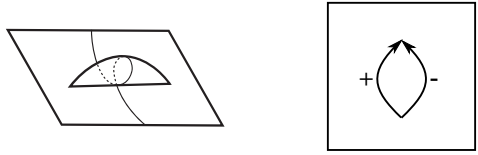}
\caption{The abstract diagram and the Gauss diagram of a pinch}\label{fig:pinch}
\end{figure}
\end{definition}

If $\mathcal K$ is presented by a Gauss diagram $\Gamma$ the parity is assigned to the curves in $\Gamma$, and for any two paired curves $a_+$, $a_-$ they have the same parity.

\begin{example}[\cite{FM}]
The {\em Gaussian parity} $p^G$ of a double line $a$ of a $2$-knot diagram is the parity of the number of intersections with double lines of a path connecting the corresponding points on the paired curves $a$. The double line and the tangent vector to the path must define the same orientation of the sphere at the end points.

\begin{figure}[h]
\centering\includegraphics[height=2cm]{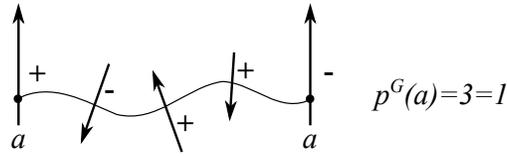}
\caption{Definition of Gaussian parity for $2$-knots}\label{fig:gaussian_2parity}
\end{figure}
\end{example}

\begin{example}[\cite{FM}]
The {\em link parity} of a double line of a two-component $2$-link is even if the intersecting leaves belong to one component, and odd if the intersecting leaves belong to different component.
\end{example}

Let us list some general properties of parities on $2$-knots.

\begin{proposition}\label{prop:cusp_parity}
If a double line ends with a cusp then its parity is zero.
\end{proposition}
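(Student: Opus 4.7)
The plan is to reduce the cusp-ending double line $a$ to a pinch via Roseman moves, so that the pinch property applies. Let $c$ denote the cusp where the two paired curves $a_+, a_-$ terminate. I would first introduce an auxiliary pinch $b = (b_+, b_-)$ in a small neighborhood of $c$ by applying the Roseman move $\mathcal{R}_3$ (the move that creates a pinch); by the pinch property, $p_D(b) = 0$ in the enlarged diagram.

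Next, I would use further Roseman moves to slide $b$ so that one of its cusps is brought to coincide with $c$, and then apply the Roseman move that cancels two matching cusps (the $2$-knot analog of a Reidemeister~1 move). After this cancellation, $a$ and $b$ merge into a single double line $a'$ in the new diagram $D'$. The local correspondence property identifies $p_{D'}(a') = p_D(a)$ via arcs of $a$ unaffected by the moves, and identifies $p_{D'}(a') = p_D(b) = 0$ via arcs of $b$ unaffected by the moves. Chaining these equalities yields $p_D(a) = 0$.

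The main obstacle is verifying at the Gauss-diagram level (Fig.~\ref{fig:gauss_roseman_moves}) that the pairings $\{a_+, a_-\}$ and $\{b_+, b_-\}$ really combine into the single pairing of $a'$ in a way consistent with the sign and orientation conditions of a $2$-Gauss diagram, and that no hidden triple points are created by the sliding step that would spoil local correspondence on the arcs of interest. Should the direct cusp-cancellation step turn out to be unavailable as a single Roseman move, my backup is to position $b$ so that it meets $a$ transversely at two triple points flanking $c$, and then extract $p_D(a) = 0$ from the triple-point relations $p_D(a) + p_D(b) + p_D(d) = 0$, using $p_D(b) = 0$ and the symmetry between the two triple points to cancel the contributions of the auxiliary third curves.
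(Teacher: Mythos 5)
Your overall strategy --- pushing the cusp into a configuration where the pinch property applies and then chaining local correspondence --- is the same in spirit as the paper's, but the step that carries your whole argument is not available. There is no Roseman move that cancels two branch points terminating two \emph{different} double lines and fuses those lines into one: the $2$-knot analogue of the first Reidemeister move is exactly $\mathcal R_3$ (as the paper itself notes), and it only creates or deletes a pair of branch points joined by a double arc lying on a single sheet, i.e.\ a pinch; it never merges the cusp of $a$ with a cusp of your auxiliary pinch $b$. Hence the key equality $p_{D'}(a')=p_D(b)$ rests on a move you have not exhibited. To make the plan run you would have to realize the fusion by an explicit short sequence of genuine Roseman moves, each leaving suitable arcs of both $a$ and $b$ untouched so that the local correspondence property can be chained --- and producing that explicit local sequence is precisely the content of the proof. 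The paper does it in one stroke: it applies the single move $\mathcal R_4$ in a neighbourhood of the cusp, after which $a$ corresponds, along arcs untouched by the move, to a double line $a'$ of the new diagram which is a pinch, so $p_D(a)=p_{D'}(a')=0$ immediately.

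Your backup does not close the gap either. A triple point on $a$ produced by pushing a finger of an auxiliary sheet across $a$ involves, besides $a$, two \emph{new} double curves (the intersections of the pushed sheet with the two local sheets of $a$); generically the pinch curve $b$ itself is not one of the three curves through such a triple point, so the relations have the form $p(a)+p(d_1)+p(d_2)=0$ with both auxiliary parities unknown. Even in your idealized form $p(a)+p(b)+p(d_i)=0$ with $p(b)=0$, the symmetry of the two triple points only yields $p(d_1)=p(d_2)$; adding or subtracting the two relations then gives a triviality (recall $2p=0$), not $p(a)=0$, unless you can independently show $p(d_i)=0$, which is essentially the statement being proved. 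If you want to avoid invoking $\mathcal R_4$, a repair close to your idea is to create the pinch on the sheet just beyond the cusp (where the two leaves of $a$ fold into a single sheet), aligned with $a$, so that the saddle move $\mathcal R_2$ applies to the pair $(a,b)$; Proposition~\ref{prop:R2_parity} then gives $p(a)=p(b)=0$. But verifying that this local position is attainable is exactly the picture your sketch leaves unexamined.
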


\begin{proof}
  Let a double line $a$ end in a cusp. Apply a Roseman move $\mathcal R_4$ near the cusp (Fig.~\ref{fig:cusp_parity}). By the local correspondence property, $p_D(a)=p_{D'}(a')=p_{D'}(a'')$. By the pinch property, $p_{D'}(a')=0$. Hence, $p_D(a)=0$.

\begin{figure}[h]
\centering\includegraphics[width=0.6\textwidth]{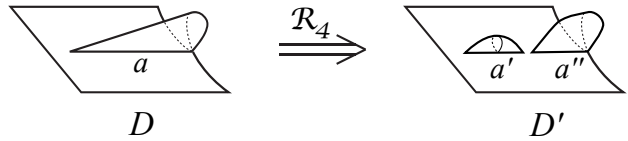}
\caption{Proof of Proposition~\ref{prop:cusp_parity}}\label{fig:cusp_parity}
\end{figure}

\end{proof}

\begin{proposition}\label{prop:R2_parity}
The double line involved in a $\mathcal R_2$ move have the same parities.
\end{proposition}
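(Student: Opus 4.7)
The plan is to reduce the claim to the cusp or pinch properties by means of auxiliary Roseman moves. The $\mathcal{R}_2$ move locally introduces two new double lines; call them $a$ and $b$. By the local correspondence property, the rest of the diagram plays no role, so it is enough to argue in a small neighbourhood of the $\mathcal{R}_2$ region.

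First I would draw the local model of the move and check whether $a$ and $b$ terminate in cusps at the boundary of the $\mathcal{R}_2$ patch. If they do, Proposition~\ref{prop:cusp_parity} immediately gives $p_D(a)=0=p_D(b)$ and there is nothing more to prove. Otherwise I would apply a further Roseman move ($\mathcal{R}_3$ or $\mathcal{R}_4$, whichever fits the local picture) inside the same neighbourhood to push a pinch curve $c$ through the region so that $a$, $b$ and $c$ meet in a single triple point. Such a move is available because the $\mathcal{R}_2$ patch contains a disk of free ambient space transverse to $a\cup b$. Once the triple point is present, the pinch property gives $p(c)=0$, and the triple point property gives
\[
p_D(a)+p_D(b)+p_D(c)=0,
\]
hence $p_D(a)+p_D(b)=0$ in the abelian group $A$.

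To upgrade this identity to $p_D(a)=p_D(b)$ I would run the same construction a second time with the auxiliary pinch introduced on the opposite side of the $\mathcal{R}_2$ bigon, or equivalently with the orientations of $a$, $b$ swapped. The local symmetry of the $\mathcal{R}_2$ configuration exchanges $a\leftrightarrow b$, so the companion triple point identity, combined with the first, forces $p_D(a)=p_D(b)$. The main obstacle will be keeping the orientation conventions at the triple point consistent through the sequence of auxiliary Roseman moves, since a sign error would leave us with only $p_D(a)=-p_D(b)$, which is strictly weaker than the claim in a general abelian group $A$. Verifying the orientations via the normals at the double lines (as in Remark~\ref{rem:geometrical_triple_point}) is the step that actually requires care; the rest is a direct application of the three axioms together with Proposition~\ref{prop:cusp_parity}.
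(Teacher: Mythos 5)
There is a genuine gap, and it starts with a misreading of the move itself. In this paper $\mathcal{R}_2$ is the saddle move on the double-point set: it takes two double lines $a,b$ present \emph{before} the move and reconnects their four outgoing arcs into two new double lines $c,d$ \emph{after} the move. It is not the analogue of the one-dimensional Reidemeister II move that creates a bigon of two new double lines (that configuration is the bigonal prism treated in Proposition~\ref{prop:parity_mod2}(1), and your pinch-plus-triple-point argument is essentially the proof of that statement, not of this one). Consequently your whole construction addresses the wrong local picture; in particular the worry about $a$ and $b$ "terminating in cusps at the boundary of the patch" does not arise, since the lines simply continue into the unaffected part of the diagram, and Proposition~\ref{prop:cusp_parity} is irrelevant here.

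Even granting your reading, the final step does not close. Your construction yields only $p_D(a)+p_D(b)=0$, and repeating it "on the opposite side" or with $a$ and $b$ exchanged produces the \emph{same} symmetric relation again, not a second independent one; so you are left with $p_D(a)=-p_D(b)$, exactly the weaker conclusion you flag yourself. You cannot promote this to $p_D(a)=p_D(b)$ without knowing $2p_D(a)=0$, which at this point is not available: it is proved only later, in Proposition~\ref{prop:parity_mod2}(2). The paper's actual proof needs none of this machinery: after the $\mathcal{R}_2$ saddle, each of the new lines $c$ and $d$ contains an arc unaffected by the move coming from $a$ and another coming from $b$, so the local correspondence property alone gives $p_D(a)=p_{D'}(c)=p_D(b)$ (and likewise via $d$). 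That single observation is the entire argument.
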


\begin{proof}
Let $p$ be a parity on diagrams of some $2$-knot. Let $D\to D'$ be a Roseman move $\mathcal R_2$. The move transforms two double lines $a$ and $b$ in $D$ to two double lines $c$ and $d$ in $D'$. Let us choose two sites on the double curve  $a$ as shown in Fig.~\ref{prop:R2_parity}. By the local correspondence property, we have $p_D(a)=p_{D'}(c)$ and $p_D(a)=p_{D'}(d)$. Analogously, $p_D(b)=p_{D'}(c)$ and $p_D(b)=p_{D'}(d)$. Hence, $p_D(a)=p_{D}(b)$.

\begin{figure}[h]
\centering\includegraphics[width=0.6\textwidth]{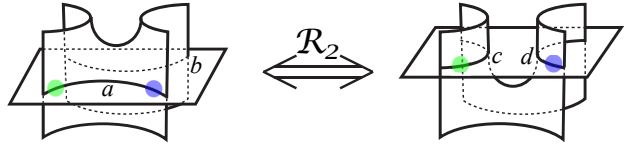}
\caption{Proof of Proposition~\ref{prop:R2_parity}}\label{fig:R2_parity}
\end{figure}
\end{proof}

\begin{proposition}\label{prop:R5_parity}
The sum of parities of double lines which locally form a trigonal prism is equal to $0$.
\end{proposition}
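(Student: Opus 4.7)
My plan is to apply the triple-point property of Definition~\ref{def:parity} at each of the six vertices of the prism and then combine the six resulting identities. A trigonal prism has six vertices and nine edges; at each vertex three of the edges (double lines of $D$) meet at a geometrical triple point.

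Let $T$, $B$, and $V$ denote the sums of the parities of the three top-triangle edges, the three bottom-triangle edges, and the three vertical edges respectively, so that the total sum is $S = T + B + V$. Summing the three triple-point relations at the vertices of the top triangular face, each top-triangle edge contributes twice (once from each endpoint) and each vertical edge contributes exactly once; thus $2T + V = 0$ in $A$. The analogous bottom-face summation yields $2B + V = 0$. A direct summation of all six vertex relations gives only $2S = 0$, which is insufficient over a general abelian group, so the geometric content of the prism configuration must be used to cut the ``2'' in half.

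To conclude $S = 0$, I would establish $B = T$ via the local-correspondence property: the arc of a double line of $D$ lying on the top triangular face of the prism and the parallel arc lying on the bottom face are segments of a single double line, so by local correspondence their parities coincide. Summing this identification over the three pairs of opposite triangle edges gives $B = T$, and therefore $S = T + B + V = 2T + V = 0$, as required.

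The principal obstacle is justifying the identification of opposite triangular-face arcs as belonging to a single double line of $D$. If the concrete prism configuration in Fig.~\ref{fig:roseman_move} does not furnish this pairing directly, my fallback is to apply an auxiliary Roseman $\mathcal R_2$ move in a collar of the prism and invoke Proposition~\ref{prop:R2_parity} to secure the required parity equalities between opposite edges before carrying out the summation; a subsidiary check using Remark~\ref{rem:geometrical_triple_point} ensures that the orientations at the six triple points are compatible with the signed combination of vertex equations needed for the argument.
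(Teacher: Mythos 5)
Your argument starts from a misreading of the configuration. In this paper a trigonal prism is not a polyhedral graph with six triple points as vertices and nine double lines as edges: it is the local configuration of three sheets intersecting pairwise in three double lines, whose section by a transversal plane is the triangle of a Reidemeister III picture (see the remark on reduction to the $1$-dimensional case and Fig.~\ref{fig:1_dim_reduction}). Only the three ``vertical'' lines of the prism are double lines; the triangular faces are swept out by arcs of the sheets, and the configuration contains no triple points at all. (Your reading is also geometrically untenable: double lines of a $2$-knot diagram pass \emph{through} triple points, they do not terminate there, so a prism whose nine edges are double lines ending at six triple points is not a local piece of a diagram.) Consequently there is nothing in $D$ to which the triple point property can be applied directly, and your scheme of summing six vertex relations --- together with the unproved identification $B=T$ of ``opposite face arcs'', which the local correspondence property does not give --- has no object to act on. The later uses of the proposition, in Lemma~\ref{lem:delta_properties}(2) and in Proposition~\ref{prop:parity_mod2}, where exactly three double lines are said to form a trigonal prism and the conclusion is $p(a)+p(b)+p(c)=0$, confirm the intended meaning.

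The missing idea is a Roseman move. Apply $\mathcal R_5$ (the move imitating Reidemeister III in the transversal section) to the prism: in the resulting diagram $D'$ the three double lines $a$, $b$, $c$ do meet in a triple point, so the triple point property yields $p_{D'}(a)+p_{D'}(b)+p_{D'}(c)=0$; since each of $a$, $b$, $c$ retains an arc unaffected by the move, the local correspondence property gives $p_D(x)=p_{D'}(x)$ for $x=a,b,c$, and hence $p_D(a)+p_D(b)+p_D(c)=0$. Without invoking a move that creates a triple point, the triple point property cannot enter the argument at all, so the proposal as written does not prove the statement.
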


\begin{proof}
Let $a$, $b$, $c$ be the double lines which locally form a trigonal prism. Let $D\to D'$ be the a Roseman move $\mathcal R_5$ applied to the double lines (Fig.~\ref{fig:R5_parity}). Then $a$, $b$, $c$ intersects in a triple point in $D'$. By the triple point property (and the correspondence property), $p_D(a)+p_D(b)+p_D(c)=p_{D'}(a)+p_{D'}(b)+p_{D'}(c)=0$.

\begin{figure}[h]
\centering\includegraphics[width=0.6\textwidth]{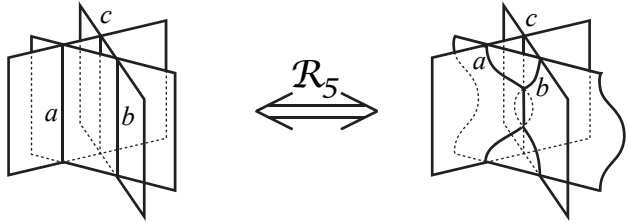}
\caption{Proof of Proposition~\ref{prop:R5_parity}}\label{fig:R5_parity}
\end{figure}
\end{proof}

\begin{remark}
The proofs of various properties of parities for $1$-dimensional knots developed in~\cite{IMN1} can be adapted for $2$-knots. When a $2$-knot diagram looks locally like a prism over a $1$-dimensional diagram, we can imitate Reidemeister moves $R_1, R_2, R_3$ applied to the $1$-dimensional diagram, with the Roseman moves $\mathcal R_3$, $\mathcal R_1$ and $\mathcal R_5$.

\begin{figure}[h]
\centering\includegraphics[width=0.8\textwidth]{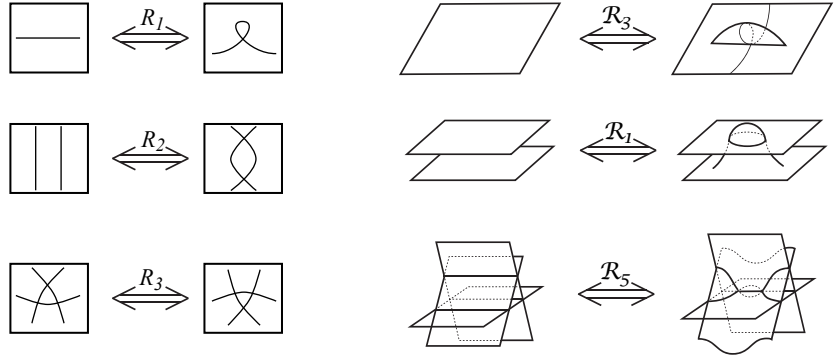}
\caption{Reduction to $1$-dimensional case. For Roseman moves, the $1$-dimensional diagrams appear as the sections by a vertical plane which goes in the middle.}\label{fig:1_dim_reduction}
\end{figure}
\end{remark}

\begin{proposition}\label{prop:parity_mod2}
\begin{enumerate}
\item If double lines $a$ and $b$ form a bigonal prism in a diagram $D$ then $p_D(a)+p_D(b)=0$.
\item For any parity $p$ and any  double line $a\in\mathcal D(D)$ we have $2p_D(a)=0$.
\end{enumerate}
\end{proposition}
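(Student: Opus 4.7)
The plan is to establish Part (1) by reducing the bigonal-prism relation to the triple-point axiom via an auxiliary double line of zero parity, and then to derive Part (2) by combining Part (1) with Proposition~\ref{prop:R2_parity}.

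For Part (1), let $a$ and $b$ be the two double lines of the bigonal prism in $D$. I would first apply a Roseman move $\mathcal R_3$ in a collar of the bigon, introducing a pinch whose double line $c$ satisfies $p_D(c)=0$ by the pinch axiom. Next, I would push $c$ across the bigon using a sequence of $\mathcal R_5$ moves---which play the role of one-dimensional $R_3$ moves, as indicated in the Remark preceding this proposition---producing a diagram $D'$ in which $a$, $b$ and $c$ meet at a single triple point. The local correspondence axiom yields $p_{D'}(a)=p_D(a)$ and $p_{D'}(b)=p_D(b)$, and the triple-point axiom applied at the new triple point then gives $p_D(a)+p_D(b) = -p_{D'}(c) = 0$.

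For Part (2), fix any double line $a\in\mathcal D(D)$. The idea is to exploit the $\mathcal R_2$ move of Proposition~\ref{prop:R2_parity}: it transforms a pair of double lines containing $a$ into a new pair $c,d$ in an equivalent diagram $D'$ with $p_{D'}(c)=p_{D'}(d)=p_D(a)$. The key geometric observation is that this resulting pair $(c,d)$ forms a bigonal prism in $D'$, so Part (1) supplies the additional relation $p_{D'}(c)+p_{D'}(d)=0$. Combining the two gives $2 p_D(a) = p_{D'}(c)+p_{D'}(d)=0$.

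The main obstacle is the explicit geometric manipulation in Part (1): realising the sequence of Roseman moves that slides the pinch across the bigon so as to produce a single triple point of $a$, $b$, and the pinch double line $c$. This is a two-dimensional bookkeeping analog of the classical one-dimensional ``$R_3$-then-$R_2$'' trick for bigons, and should go through along the lines sketched in the Remark above; checking that each intermediate Roseman move respects the local correspondence axiom (so that the parities of $a$ and $b$ are unchanged throughout) will be the bulk of the work.
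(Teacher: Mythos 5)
Your Part (1) is essentially the paper's own argument: the paper applies an $\mathcal R_3$ move inside the bigon (a first Reidemeister move in the transversal section), so that $a$, $b$ and the pinch line $c$ form a trigonal prism, and then combines Proposition~\ref{prop:R5_parity} with the pinch property; your ``push the pinch across with $\mathcal R_5$ moves and apply the triple-point axiom'' is just the proof of Proposition~\ref{prop:R5_parity} inlined. The only point you omit is the paper's remark that for an alternating bigon one uses the freedom to choose the over/under structure of the created pinch.

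Part (2), however, has a genuine gap. You fix an arbitrary double line $a$ and immediately invoke ``the $\mathcal R_2$ move'' applied to ``a pair of double lines containing $a$''. The $\mathcal R_2$ move (double-curve surgery) is applicable only to a very specific local configuration: two parallel double arcs along which the same two sheets intersect, positioned as in the local model of the move. For an arbitrary double line in an arbitrary diagram no such partner arc need exist, and you give no construction of one, so the move you want to perform may simply not be available; this is the missing step, not a routine check. Moreover, your ``key geometric observation'' that the resulting pair $(c,d)$ forms a bigonal prism is asserted rather than verified --- it is plausible in the standard local model of the saddle move, but it is precisely what would have to be checked (and note that if the configuration before the move were already known to be a bigonal prism, Part (1) together with Proposition~\ref{prop:R2_parity} would finish the proof without performing the move at all). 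The paper sidesteps all of this: it works in a transversal section at a point of $a$ and uses only $\mathcal R_3$ and $\mathcal R_5$ moves, which are always available there; these create a line $b$ with $p_{D'}(a)+p_{D'}(b)=0$ by Part (1), and two auxiliary lines $c,d$ forming trigonal prisms with $a$ and with $b$, whence $p_{D''}(a)=p_{D''}(b)$ by Proposition~\ref{prop:R5_parity}; together these give $2p_D(a)=0$. To repair your version you would first have to create, by moves that leave $p(a)$ unchanged, a configuration in which an $\mathcal R_2$ move involving $a$ genuinely applies, and then verify the bigonal-prism claim in the local model --- at which point you are doing essentially the paper's work by a longer route.
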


\begin{proof}
1. Let $a$ and $b$ form a bigonal prism in a diagram $D$. The section of the prism by a transversal plane looks like a bigon (Fig.~\ref{fig:r2_parity}). Apply a Roseman move $\mathcal R_3$. In the section, it corresponds to a first Reidemeister move. The double lines $a$, $b$ and $c$ form a trigonal prism, hence, $p_{D'}(a)+p_{D'}(b)+p_{D'}(c)=0$. By the pinch property, $p_{D'}(c)=0$. Then $p_{D'}(a)+p_{D'}(b)=0$, and $p_{D}(a)+p_{D}(b)=0$ by the correspondence property.

\begin{figure}[h]
\centering\includegraphics[width=0.2\textwidth]{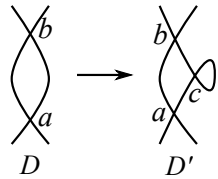}
\caption{Proof of the equality $p(a)+p(b)=0$}\label{fig:r2_parity}
\end{figure}

Note that the statement holds for prisms which give an alternating bigon in the section because we can choose between two under-oversheet structure of the pinch.

2. Let $a$ be a double line in a diagram $D$. Consider the section of the diagram by a transversal plane at some point of $a$. It looks like a crossing (Fig.~\ref{fig:parity_mod2} left). Apply Roseman moves $\mathcal R_3$ and $\mathcal R_5$ so that they give the diagrams as shown in Fig.~\ref{fig:parity_mod2} in the section by the transversal plane. Then we have $p_{D'}(a)+p_{D'}(b)=0$ by the first statement. In the diagram $D''$, $p_{D''}(a)+p_{D''}(c)+p_{D''}(d)=0$ and $p_{D''}(b)+p_{D''}(c)+p_{D''}(d)=0$ by Proposition~\ref{prop:R5_parity}. Hence, $p_{D''}(a)=p_{D''}(b)$. By the correspondence property, $p_{D'}(a)=p_{D'}(b)$. On the other hand, we have $p_{D'}(a)+p_{D'}(b)=0$ by the first statement of the proposition. Therefore, $2p_{D'}(a)=0$ and $2p_D(a)=0$.

\begin{figure}[h]
\centering\includegraphics[width=0.6\textwidth]{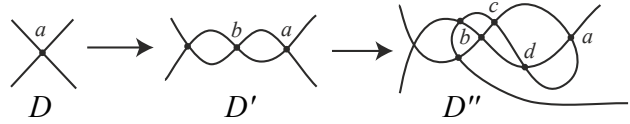}
\caption{Proof of the equality $2p(a)=0$}\label{fig:parity_mod2}
\end{figure}
\end{proof}

\begin{definition}
Parity $p_u$ with coefficients in $A_u$ is called a {\em universal
parity} if for any parity $p$ with coefficients in $A$ there exists
a unique homomorphism of group $\rho: A_u\to A$ such that
$p_D=\rho\circ (p_u)_D$ for any diagram $D$.
\end{definition}

By definition, the universal parity accumulates maximal information on the double lines among all parities.

The universal parity for $1$-dimensional free knots was described in~\cite{IMN1}.
\begin{theorem}
Gaussian parity (with coefficients in $A_u = \Z_2$) is the universal parity of
free knots.
\end{theorem}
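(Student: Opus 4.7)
The plan is to build the unique factorization $p = \rho \circ p^G$ directly. The $1$-dimensional analogue of Proposition~\ref{prop:parity_mod2} (which holds because, as the Remark explains, the Reidemeister moves $R_1$, $R_2$, $R_3$ on a knot diagram play the same roles that $\mathcal R_3$, $\mathcal R_1$, $\mathcal R_5$ played in the $2$-dimensional proof) forces any parity $p$ to satisfy $2p(a) = 0$ for every crossing $a$. Hence $p$ takes values in the $2$-torsion of $A$ and the universal coefficient group is no larger than $\Z_2$; it remains to prove that $p(a)$ depends only on the Gaussian parity of $a$.

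Setting $\rho \colon \Z_2 \to A$ by $\rho(\bar 1) := p(a_0)$ for some fixed Gaussian-odd crossing $a_0$, universality reduces to (i) $p(a) = 0$ for every Gaussian-even $a$, and (ii) $p(a) = p(a_0)$ for every Gaussian-odd $a$ across all diagrams of the knot. Claim~(ii) follows from~(i): by connecting the two diagrams through Reidemeister moves and inserting a pair of auxiliary crossings via an $R_2$, one arranges $a$ and $a_0$ to participate in a common triple point whose third crossing is Gaussian-even (the three Gaussian parities at an $R_3$ triple sum to $0$ modulo $2$), hence has $p$-value $0$ by~(i); the triple-point axiom and $2p = 0$ then give $p(a) = p(a_0)$.

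The heart of the matter is claim~(i). I would induct on the integer $n$ counting the chord endpoints lying on one of the two arcs into which $a$ divides the Gauss circle; Gaussian-evenness of $a$ says $n$ is even. The base case $n = 0$ is an $R_1$ kink, for which $p(a) = 0$ by the pinch axiom. For the inductive step one must exhibit a sequence of $R_2$/$R_3$ moves that preserves both $p(a)$ (by local correspondence, the triple-point relation, and $2p = 0$) and the Gaussian-evenness of $a$, while strictly lowering $n$: typically one picks two adjacent endpoints on the arc and either cancels them by an $R_2$ if they belong to the same chord, or swaps them past one another by an $R_3$ before cancelling.

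The main obstacle is the combinatorial bookkeeping needed to carry out this inductive reduction without dragging foreign chord endpoints onto the arc of $a$ or losing track of $a$ itself; the fact that we are working with free (rather than virtual) knots, so that crossing switches and virtualizations are available, is what makes enough room to push the endpoints around. Executing this carefully is the technical core of the argument in~\cite{IMN1}, and adapting any of the standard $1$-dimensional parity tricks to the present setting would constitute a complete proof.
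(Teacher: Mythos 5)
You do not actually prove the key step, and the strategy you sketch for it would not work as stated. The paper itself only quotes this theorem from \cite{IMN1}, but the method it uses for the $2$-dimensional analogue (Theorem~\ref{thm:main_theorem}) shows what a proof must look like, and measured against that your claim~(i) has a genuine gap. Your induction assumes that for a Gaussian-even crossing $a$ one can always strictly lower the number $n$ of chord endpoints on one arc of $a$, by cancelling two adjacent endpoints with a decreasing $R_2$, or sliding them past each other with an $R_3$ first. Neither move is available in general: a decreasing $R_2$ needs the two chords to be adjacent at \emph{both} ends, and an $R_3$ needs an actual triangle of three chords; two adjacent endpoints on the arc of $a$ need not sit in either configuration. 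Worse, free knots admit diagrams that cannot be simplified at all (this is exactly what parity-based minimality theorems establish), so there is no reason your reduction ever reaches the $n=0$ kink. Claim~(ii) inherits this problem and its own construction (forcing $a$ and a fixed odd crossing $a_0$, possibly living in different diagrams, into a common $R_3$ triangle with a Gaussian-even third crossing) is likewise only asserted. Since you then defer ``the technical core'' to \cite{IMN1}, what you have is a reduction to the literature, not a proof; the part you did argue (that $2p(a)=0$, via the $1$-dimensional analogue of Proposition~\ref{prop:parity_mod2}) is fine but is not where the difficulty lies.

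The missing idea --- used both in \cite{IMN1} and in this paper's proof of the $2$-dimensional theorem --- is not diagram simplification but the potential (travelling kink) argument, which never modifies the given diagram beyond adding small kinks. For regular points $x,y$ of a diagram define $\delta_{x,y}$ as the parity of the crossing created by an $R_1$ kink joining $x$ to $y$; prove $\delta_{x,z}=\delta_{x,y}+\delta_{y,z}$ and $2\delta_{x,y}=0$ using $R_2$ and $R_3$ (the $1$-dimensional counterparts of Propositions~\ref{prop:R5_parity} and~\ref{prop:parity_mod2}); show $p(a)=\delta_{x,y}$ for points $x,y$ adjacent to the crossing $a$ (the counterpart of Lemma~\ref{lem:parity_potential}); and show that the potential jumps by a fixed element $\delta\in A$ each time the reference point crosses a branch of the diagram, invariance under virtualization being what makes $\delta$ independent of the branch crossed (the counterpart of Corollary~\ref{cor:delta_double_line}). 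Summing the jumps along the arc joining the two endpoints of the chord of $a$ gives $p(a)=p^G(a)\cdot\delta$ on every diagram, and uniqueness of $\rho\colon\Z_2\to A$ is immediate since $\rho(\bar 1)$ must equal $\delta$. Rewriting your claims~(i) and~(ii) in this form would close the gap.
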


In the next section we prove the analogous result for free $2$-knots.

\section{Main result}\label{sec:main_result}

The main result of the paper is the following theorem.

\begin{theorem}\label{thm:main_theorem}
The Gaussian parity (with coefficients in $A_u = \Z_2$) is the universal parity of free $2$-knots.
\end{theorem}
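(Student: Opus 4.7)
The plan is to show that any parity $p$ on the diagrams of a free $2$-knot $\mathcal K$ with coefficients in an abelian group $A$ factors through the Gaussian parity $p^G$ via a unique group homomorphism $\rho\colon\Z_2\to A$. By Proposition~\ref{prop:parity_mod2}(2) every value $p_D(a)$ lies in the $2$-torsion subgroup $A[2]$, which is a $\Z_2$-module. Thus we only need to show that $p_D(a)$ is determined by $p^G_D(a)\in\Z_2$; the relation $2\rho(1)=\rho(0)=0$ required for $\rho$ to be a homomorphism will then be automatic.

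The core of the argument is the following \emph{Key Lemma}: for every diagram $D$ of $\mathcal K$ and every double line $a\in\mathcal D(D)$,
\[
p^G_D(a)=0 \ \Longrightarrow\ p_D(a)=0.
\]
The strategy for the Key Lemma is induction on the minimal number $n(a)$ of transverse intersections of a path on the abstract diagram $D$ joining the two preimages of $a$ with the remaining double lines; by the definition of Gaussian parity, $n(a)$ is even. In the base case $n(a)=0$ the line $a$ is disjoint from the other double lines, so using Roseman moves together with crossing switches and virtualizations (permitted in the free $2$-knot setting) it can be brought into a pinch configuration, and the pinch property of Definition~\ref{def:parity} yields $p_D(a)=0$. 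The inductive step removes two consecutive intersections of the path with the rest of the diagram using Roseman moves $\mathcal R_2$, $\mathcal R_5$ (whose effect on parities is controlled by Propositions~\ref{prop:R2_parity} and~\ref{prop:R5_parity}) together with free $2$-knot equivalences, while the local correspondence property ensures that $p_D(a)$ is preserved at each step.

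Granted the Key Lemma, define $\alpha:=p_D(a)\in A[2]$ for any odd double line $a$ in any diagram $D$ of $\mathcal K$. To check $\alpha$ is well-defined, take two odd lines $a_1,a_2$ and use Roseman moves (and, if necessary, free $2$-knot equivalences to pass between virtual representatives) to bring them into a common diagram in which they participate in a triple point with a third line $c$. The triple point property applied to $p^G$ forces $c$ to be even, hence $p_D(c)=0$ by the Key Lemma, and the triple point property for $p$ gives $p_D(a_1)=p_D(a_2)$. Setting $\rho(0)=0$ and $\rho(1)=\alpha$ defines the required homomorphism $\Z_2\to A$, and uniqueness of $\rho$ follows since $\rho(1)$ is forced by $p_D=\rho\circ p^G_D$ whenever an odd double line exists.

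I expect the Key Lemma to be the main obstacle. In the $1$-dimensional case of~\cite{IMN1} an even chord is eliminated by successive Reidemeister $R_2$ moves combined with free-knot equivalences; the $2$-dimensional counterpart requires a careful interplay of the seven Roseman moves with crossing switches and virtualizations, because the ambient complex is a $2$-sphere and the local models of the moves are considerably more rigid than in dimension one. The delicate point is to exhibit, for each even double line, an explicit sequence of such moves that progressively isolates it without introducing new obstructions that would require a separate analysis, and to make the induction on $n(a)$ terminate cleanly.
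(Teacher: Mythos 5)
Your overall skeleton (even double lines have vanishing parity, all odd ones share a common value $\alpha$, set $\rho(1)=\alpha$) is a plausible reduction, but the two steps that carry all the weight are only asserted, and the assertions are exactly where the difficulty lies — as you yourself note. In the base case of your Key Lemma, $n(a)=0$ only says that \emph{some} path between the two preimage curves $a_+,a_-$ misses the other double curves; it does not mean $a$ is unentangled: $a_\pm$ may still pass through triple points and wind among the other curves, and there is no argument (and no evident sequence of Roseman moves, crossing switches and virtualizations) that brings such a line into a pinch configuration while the local correspondence property keeps track of its parity. The inductive step is worse: the path $\gamma$ is auxiliary, not part of the diagram, and two consecutive intersections of $\gamma$ with double lines are in general intersections with two unrelated sheets that do not form the local configuration required by $\mathcal R_2$ or $\mathcal R_5$; such intersections cannot be ``cancelled'' by moves. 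The same kind of unproven geometric claim appears in your well-definedness step (that any two odd double lines, possibly in different diagrams, can be brought to meet a third line in a common triple point). So the proposal, as written, has genuine gaps at the Key Lemma and at the well-definedness of $\alpha$.

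The paper's proof avoids all global manipulation of the diagram. It introduces \emph{potentials}: for regular points $x,y$, the element $\delta_{x,y}\in A$ is the parity of the small double line created by an $\mathcal R_1$ move joining $x$ and $y$. Using only the local machinery already developed (Propositions~\ref{prop:R2_parity}--\ref{prop:parity_mod2}), one shows that the potential is symmetric and additive, $\delta_{x,z}=\delta_{x,y}+\delta_{y,z}$; that $p(a)$ equals the potential between points adjacent to $a$ across the line (Lemma~\ref{lem:parity_potential}); and that, for a virtualization-invariant parity, crossing any double line changes the potential by a fixed element $\delta$ with $2\delta=0$ (Corollary~\ref{cor:delta_double_line}). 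Walking along the path from $a_-$ to $a_+$ then gives $p(a)=k\cdot\delta$ with $k$ the number of intersections of the path with double lines, i.e.\ $p(a)=p^G(a)\cdot\delta$, and $\rho(x)=x\cdot\delta$ is the desired homomorphism. In other words, instead of trying to remove the intersections of $\gamma$ with the diagram (which is not possible in general), the paper \emph{accounts} for each such intersection by a fixed jump $\delta$ of a locally constant potential function; that is the missing idea in your attempt, and without it (or a genuinely new argument establishing your Key Lemma) the proof does not go through.
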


We start the proof with auxiliary definitions and lemmas. Let $p$ be a parity with coefficients in $A$ on diagrams of a virtual knot.

\begin{definition}
Let $x$ and $y$ be points in a diagram $D$ which do not lie on a double line. The {\em potential} between points $x$ and $y$ is the parity of the double line created by the Roseman move $\mathcal R_1$ as shown in the picture below: $\delta_{x,y}=p(a)\in A$.

\begin{figure}[h]
\centering\includegraphics[height=0.12\textwidth]{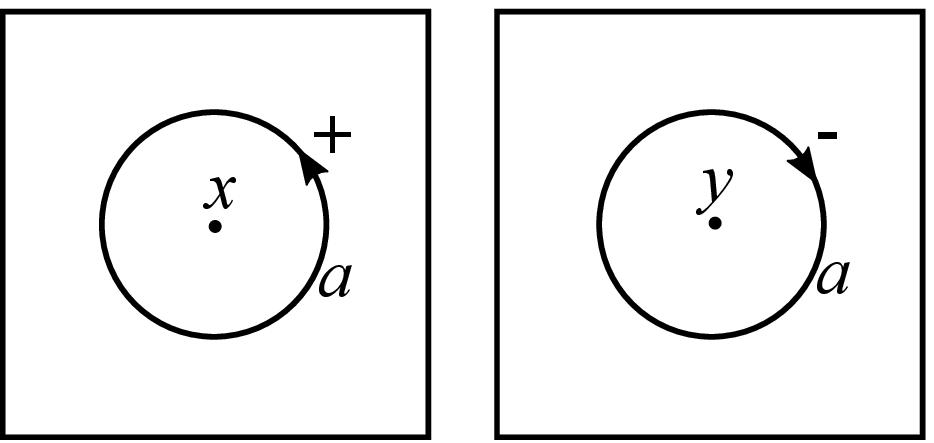} \qquad \includegraphics[height=0.12\textwidth]{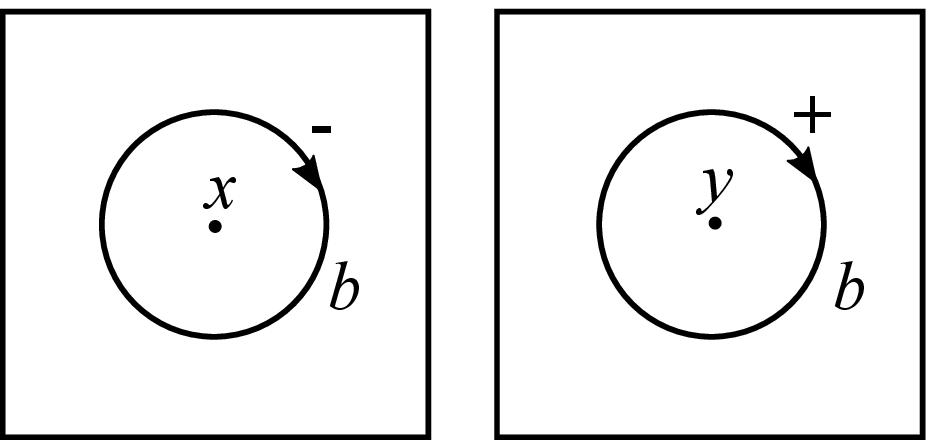} \qquad \includegraphics[height=0.12\textwidth]{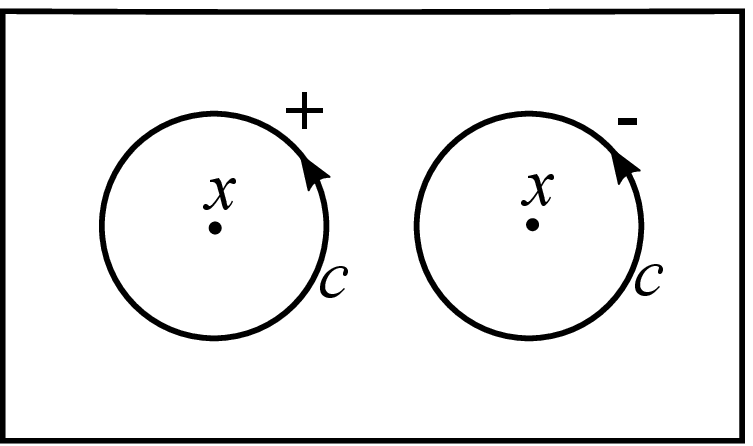}
\caption{Potentials  $\delta_{x,y}=p(a)$,  $\delta_{\bar y, x}=p(b)$ and $\delta_{x}=\delta_{x,\bar  x}=p(c)$}
\end{figure}

Analogously, elements $\delta_{\bar x,y}, \delta_{\bar y,\bar x}$ etc. can be defined. The first index corresponds to the curve with the sign $+$. The bar mark corresponds to the clockwise (negative) orientation of the curve for the first index, and to counterclockwise (positive) orientation of the curve for the second index. Let $\delta_x=\delta_{x,\bar x}$.
\end{definition}

\begin{lemma}\label{lem:delta_properties}
\begin{enumerate}
\item For any regular points $x$ and $y$ we have $\delta_{x,y}=\delta_{y,x}$, $\delta_{x,\bar y}=\delta_{\bar y, x}$ and $\delta_{\bar x,\bar y}=\delta_{\bar y,\bar x}$.
\item For any regular points $x,y,z$ the equality $\delta_{x,z}=\delta_{x,y}+\delta_{y,z}$ holds.
\item If parity $p$ is invariant under virtualization then for any $x$ and $y$ we have $\delta_{x,\bar y}=\delta_{y,\bar x}$, $\delta_{x,y}=\delta_{\bar y,\bar x}$ and $\delta_{x}=\delta_{y}$.
\end{enumerate}
\end{lemma}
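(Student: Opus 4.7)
For part~1, each of the three equalities amounts to interchanging the two sites (the ``first'' and ``second'' index) on a single double line created by an $\mathcal R_1$ move. The plan is to argue that the swap is realized by a small local Roseman isotopy (an $\mathcal R_2$, or a $180^\circ$ rotation inside the $\mathcal R_1$ bubble) which preserves the underlying double line; since paired curves $a_+, a_-$ of a double line carry the same parity value, the local correspondence property then gives the equality directly.

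For part~2, I would set up a triple-point configuration from three $\mathcal R_1$ moves. Given three regular points $x,y,z$ in $D$, apply $\mathcal R_1$ three times to produce three new double lines $a,b,c$ with parities $\delta_{x,y}, \delta_{y,z}, \delta_{x,z}$, arranged so that they locally form a trigonal prism. An $\mathcal R_5$ move then collapses the prism into a single triple point of $a,b,c$, and the triple-point axiom gives $\delta_{x,y}+\delta_{y,z}+\delta_{x,z}=0$ in $A$. Since $2\delta_{x,z}=0$ by Proposition~\ref{prop:parity_mod2}, this rearranges into the desired additivity $\delta_{x,z}=\delta_{x,y}+\delta_{y,z}$.

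For part~3, a virtualization performed on the double line produced by $\mathcal R_1$ simultaneously reverses the orientations of both paired curves, which by the asymmetric bar convention of the first and second indices turns $\delta_{x,y}$ into $\delta_{\bar x,\bar y}$ and $\delta_{x,\bar y}$ into $\delta_{\bar x,y}$. Assuming $p$ is virtualization-invariant and combining with the symmetries of part~1, we get $\delta_{x,\bar y}=\delta_{\bar x,y}=\delta_{y,\bar x}$ and $\delta_{x,y}=\delta_{\bar x,\bar y}=\delta_{\bar y,\bar x}$. For the equality $\delta_x=\delta_y$, I would slide the local $\mathcal R_1$ feature from $x$ to $y$ along a path in the diagram; each time the feature crosses another double line, the change in parity is controlled by an $\mathcal R_2$ or $\mathcal R_3$ move, and virtualization invariance (together with part~1) guarantees the value is preserved throughout the slide.

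The hardest step is the triple-point construction in part~2: the three newly created $\mathcal R_1$ double lines must be brought into a genuine trigonal prism (with compatible sheet structure and orientations) by legitimate Roseman moves, and some care is required in choosing the three initial $\mathcal R_1$ sites so that the resulting triple point is geometrical. Once the geometric setup is in place, the triple-point axiom together with the mod-$2$ vanishing from Proposition~\ref{prop:parity_mod2} finish the argument.
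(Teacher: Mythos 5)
Your part~2 is essentially the paper's argument (three $\mathcal R_1$ double lines arranged into a trigonal prism, Proposition~\ref{prop:R5_parity}, then $2\delta_{x,z}=0$ from Proposition~\ref{prop:parity_mod2}); your worry about making the triple point geometrical is unnecessary, since the triple point axiom is imposed on all triple points of the abstract diagram. The problems are in parts~1 and~3.

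In part~1 your mechanism does not work. The configurations defining $\delta_{x,y}$ and $\delta_{y,x}$ differ in which of the two paired curves carries the sign $+$ and in the orientations of both curves; these are labels attached to the double line, and no Roseman move or ambient isotopy (in particular no $\mathcal R_2$ or ``$180^\circ$ rotation inside the bubble'') can change them while the double line persists. Passing from one configuration to the other is exactly a crossing switch composed with a virtualization, and parts~1--2 of the lemma are stated for parities that need not be invariant under either operation (only part~3 assumes virtualization invariance), so you cannot invoke the local correspondence property, and the remark that $a_+$ and $a_-$ carry the same parity is about a single double line and says nothing about comparing the two configurations. The paper instead creates \emph{both} double lines, one realizing $\delta_{x,y}$ and one realizing $\delta_{y,x}$, so that they form a bigonal prism (an alternating bigon in a transversal section), and then applies Proposition~\ref{prop:parity_mod2}: the sum of their parities is $0$, and since $2p=0$ they are equal; the other two equalities of part~1 are handled the same way.

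In part~3 your derivation of $\delta_{x,\bar y}=\delta_{y,\bar x}$ and $\delta_{x,y}=\delta_{\bar y,\bar x}$ from virtualization invariance plus part~1 is fine, but your argument for $\delta_x=\delta_y$ is only an assertion: you slide the $\mathcal R_1$ kink from $x$ to $y$ and claim that ``virtualization invariance together with part~1 guarantees the value is preserved'' each time it crosses a double line, without exhibiting the moves or the reason the value cannot jump. That crossing a double line is exactly where something can happen is shown by Corollary~\ref{cor:delta_double_line}: potentials between points separated by a double line pick up the value $\delta$, so invariance of $\delta_z$ along the slide is precisely what has to be proved, not assumed. The paper avoids any sliding by a short algebraic computation: by part~2, $\delta_{x,\bar y}=\delta_{x,y}+\delta_{y,\bar y}=\delta_{x,y}+\delta_y$ and $\delta_{y,\bar x}=\delta_{y,x}+\delta_x$; combining this with $\delta_{x,y}=\delta_{y,x}$ (part~1) and $\delta_{x,\bar y}=\delta_{y,\bar x}$ (virtualization invariance) immediately gives $\delta_x=\delta_y$. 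You should replace your part~1 and the sliding step by these prism and algebraic arguments, or else restrict the whole lemma to parities invariant under crossing switch and virtualization, which is weaker than what is stated.
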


\begin{proof}

1. Apply Roseman moves $\mathcal R_1$ to produce double lines for  $\delta_{x,y}$ and $\delta_{y,x}$. Then the double lines form locally a bigonal prism, so $\delta_{x,y}+\delta_{y,x}=0$ by Proposition~\ref{prop:parity_mod2}. Then $\delta_{x,y}=\delta_{y,x}$. The other equalities are proved in the same way.

2. Apply Roseman moves $\mathcal R_1$ for $\delta_{x,z}$, $\delta_{x,y}$ and  $\delta_{y,z}$ so that the section of the diagram by a transversal plane looks as in Fig.~\ref{fig:poten_add}. Then the double lines form a trigonal prism and $\delta_{x,y}+\delta_{y,z}+\delta_{x,z}=0$ by Proposition~\ref{prop:R5_parity}. Since $2\delta_{x,z}=0$, $\delta_{x,z}=\delta_{x,y}+\delta_{y,z}$.

\begin{figure}[h]
\centering\includegraphics[height=0.25\textwidth]{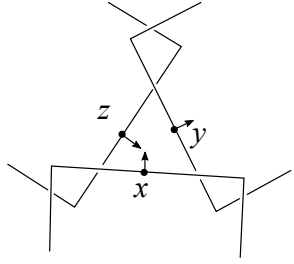}
\caption{Proof of $\delta_{x,y}+\delta_{y,z}+\delta_{x,z}=0$}\label{fig:poten_add}
\end{figure}

3. Since virtualization interchanges the signs of double lines and keeps their orientations, we have $\delta_{x,\bar y}=\delta_{y,\bar x}$ and
$\delta_{x,y}=\delta_{\bar y,\bar x}$.

By the second statement, $\delta_{x,\bar y}=\delta_{x,y}+\delta_{y,\bar y}=\delta_{x,y}+\delta_y$ and $\delta_{y,\bar x}=\delta_{y,x}+\delta_{x,\bar x}=\delta_{y,x}+\delta_x$. But $\delta_{x,y}=\delta_{y,z}$ by the first statement, and $\delta_{x,\bar y}=\delta_{y,\bar x}$ as shown above. Hence, $\delta_{x}=\delta_{y}$.

\end{proof}

When the parity $p$ is invariant under virtualization we denote $\delta=\delta_{x}\in A$ for some regular $x$. The value $\delta$ does not depend on the  point $x$. Note that $2\delta=0$ by Propostion~\ref{prop:parity_mod2}.


\begin{lemma}\label{lem:parity_potential}
Let $x,y,z,u$ be points near a double line $a$ as shown in Fig.~\ref{fig:parity_poten}. Then $p(a)=\delta_{x,y}=\delta_{y,\bar z}=\delta_{\bar z,\bar u}=\delta_{\bar u, x}$.
\end{lemma}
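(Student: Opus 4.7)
The plan is to prove each of the four claimed equalities by the same mechanism: realise the given potential as the parity of a double line that, after possibly small isotopies, forms a bigonal prism together with $a$, and then invoke Proposition~\ref{prop:parity_mod2} together with $2p(a)=0$ to conclude.

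In detail, for the first equality $p(a)=\delta_{x,y}$ I perform the Roseman move $\mathcal R_1$ that defines $\delta_{x,y}$, placing the inserted cusp-terminating double line $b$ so that it runs parallel to the segment of $a$ lying between $x$ and $y$ and together with $a$ bounds a bigonal prism. By definition of the potential, $p(b)=\delta_{x,y}$. By Proposition~\ref{prop:parity_mod2}(1), a bigonal prism forces $p(a)+p(b)=0$, while by Proposition~\ref{prop:parity_mod2}(2) we have $2p(a)=0$. Combining these gives $p(a)=p(b)=\delta_{x,y}$.

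The three remaining equalities $p(a)=\delta_{y,\bar z}=\delta_{\bar z,\bar u}=\delta_{\bar u,x}$ are obtained by repeating the same construction with the cusp of the $\mathcal R_1$ move placed across the other three pairs of adjacent regions around $a$, namely $(y,\bar z)$, $(\bar z,\bar u)$ and $(\bar u,x)$. The bar notation on the indices records which side of the local double-sheet configuration the cusp endpoint sits on; crossing from one adjacent region to the next flips this side, which is exactly the effect of placing or removing a bar in the convention fixed after the definition of $\delta$. In each case the resulting double line again forms a bigonal prism with $a$, and the same application of Proposition~\ref{prop:parity_mod2} yields the stated equality.

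The main delicate point is the orientation bookkeeping: one must verify that the geometric placement of the cusp near each of the four pairs of adjacent regions indeed realises the bar pattern $(x,y)$, $(y,\bar z)$, $(\bar z,\bar u)$, $(\bar u,x)$ and not some other combination of signs. This is a routine but careful check against the orientation convention for double lines in Fig.~\ref{fig:double_line_orientation} and the first/second-index bar convention introduced in the definition of $\delta$. Once this is confirmed, the content of the lemma is just four applications of the bigonal prism identity already available.
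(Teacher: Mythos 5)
Your proof is correct and follows the same route as the paper: create the auxiliary double line $b$ by the $\mathcal R_1$ move defining the potential, observe that $a$ and $b$ locally form a bigonal prism, and apply Proposition~\ref{prop:parity_mod2} (both the prism identity and $2p(a)=0$) to get $p(a)=p(b)=\delta_{x,y}$, with the remaining three equalities handled analogously. The only quibble is descriptive: the double line produced by $\mathcal R_1$ is a closed curve rather than a cusp-terminating one, but this does not affect the argument.
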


\begin{proof}

Apply a Roseman move $\mathcal R_1$ which creates a double line $b$ with $p(b)=\delta_{x,y}$. Then the lines $a$ and $b$ form locally a bigonal prism, so $p(a)=p(b)=\delta_{x,y}$ by Proposition~\ref{prop:parity_mod2}. The other equalities are proved analogously.

\begin{figure}[h]
\centering\includegraphics[height=0.2\textwidth]{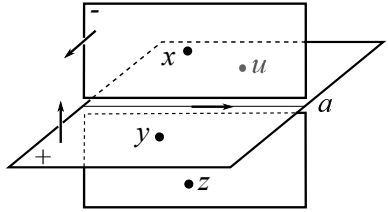}
\caption{Potentials near a double line}\label{fig:parity_poten}
\end{figure}

\end{proof}

\begin{corollary}\label{cor:delta_double_line}
Let the parity $p$ be invariant under virtualization. Then for any regular points $x$ and $z$, or $y$ and $u$ separated by a double line (see Fig.~\ref{fig:parity_poten}) we have $\delta_{x,z}=\delta_{y,u}=\delta$.
\end{corollary}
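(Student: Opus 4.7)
The plan is to compute $\delta_{x,z}$ (and symmetrically $\delta_{y,u}$) by interpolating through the point $\bar z$ (resp.\ $\bar u$), so that Lemma~\ref{lem:parity_potential} can be brought to bear on consecutive pairs of summands.

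First I would apply the additivity property of the potential (Lemma~\ref{lem:delta_properties}.2, in the mixed-orientation form used inside the proof of Lemma~\ref{lem:delta_properties}.3) to write
\[
\delta_{x,z} = \delta_{x,y} + \delta_{y,\bar z} + \delta_{\bar z, z}.
\]
By Lemma~\ref{lem:parity_potential}, the first two summands each equal $p(a)$, so together they contribute $2p(a)=0$ by Proposition~\ref{prop:parity_mod2}. The remaining summand $\delta_{\bar z, z}$ equals $\delta_{z,\bar z}=\delta_z$ by Lemma~\ref{lem:delta_properties}.1, and $\delta_z=\delta$ by Lemma~\ref{lem:delta_properties}.3; this is the only place virtualization invariance is used. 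Hence $\delta_{x,z}=\delta$.

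The computation of $\delta_{y,u}$ runs identically: decompose $\delta_{y,u}=\delta_{y,\bar z}+\delta_{\bar z,\bar u}+\delta_{\bar u, u}$, identify the first two terms with $p(a)$ via Lemma~\ref{lem:parity_potential} (so they cancel modulo~$2$), and invoke Lemma~\ref{lem:delta_properties}.3 to replace $\delta_{\bar u, u}$ with $\delta$.

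No step is technically difficult, and the only thing to be careful about is the bar bookkeeping---namely, verifying that the additivity formula of Lemma~\ref{lem:delta_properties}.2 applies when the intermediate point is equipped with a bar. This is the same minor extension already used inside the proof of Lemma~\ref{lem:delta_properties}.3 (where one writes $\delta_{x,\bar y}=\delta_{x,y}+\delta_{y,\bar y}$), so no new argument is required. Consequently the entire proof reduces to a two-term cancellation modulo~$2$ followed by the identification $\delta_z=\delta$.
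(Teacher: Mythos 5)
Your proof is correct and follows essentially the same route as the paper: the same three-term decomposition $\delta_{x,z}=\delta_{x,y}+\delta_{y,\bar z}+\delta_{\bar z,z}$, cancellation of the two $p(a)$ terms via Lemma~\ref{lem:parity_potential} and Proposition~\ref{prop:parity_mod2}, and identification of the leftover term with $\delta$ using virtualization invariance. The only cosmetic difference is your choice of intermediate points ($\bar z,\bar u$ instead of $x,\bar u$) in the computation of $\delta_{y,u}$, which changes nothing of substance.
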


\begin{proof}
  By Lemma~\ref{lem:parity_potential}, $p(a)=\delta_{x,y}=\delta_{y,\bar z}$. Then $\delta_{x,z}=\delta_{x,y}+\delta_{y,\bar z}+
  \delta_{\bar z,z}=p(a)+p(a)+\delta=\delta$. Analogously, $\delta_{y,u}=\delta_{y,x}+\delta_{x,\bar u}+\delta_{\bar u,u}=p(a)+p(a)+\delta=\delta$.
\end{proof}

Now we can finish the proof of the main theorem
\begin{lemma}
For any parity $p$ with coefficients $A$ on diagrams of free $2$-knots (or virtual $2$-knots up to virtualization) there exists an element $\delta\in A$, such that for any double line $a\in $ $p(a)=p^G(a)\cdot\delta$.
\end{lemma}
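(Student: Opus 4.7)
Set $\delta := \delta_x \in A$ for any regular point $x$; this is independent of $x$ by Lemma~\ref{lem:delta_properties}(3), and $2\delta = 0$ by Proposition~\ref{prop:parity_mod2}. The plan is to show $p(a) = p^G(a)\cdot\delta$ by decomposing the potential $\delta_{y,\bar z}$, which equals $p(a)$ by Lemma~\ref{lem:parity_potential}, along a suitable path from $y$ to $\bar z$ on the abstract diagram $D$.

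I would choose the path so that it consists of alternating segments of two kinds: arcs lying in a single face of $D$, and sheet-transitions across double lines. Applying additivity (Lemma~\ref{lem:delta_properties}(2)) turns $\delta_{y,\bar z}$ into a sum with one summand per segment. An arc contribution vanishes because the $\mathcal R_1$-loop connecting its two endpoints stays inside one face, so its double line ends in two cusps and has parity $0$ by Proposition~\ref{prop:cusp_parity}. A sheet-transition contribution across a double line $b$ equals $\delta$ by Corollary~\ref{cor:delta_double_line}, independently of which $b$ is traversed.

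Consequently $\delta_{y,\bar z} = n\cdot\delta$, where $n$ is the number of sheet-transitions on the path. Because $S^2$ has trivial $\Z_{2}$-homology in degree one, the count $n \pmod 2$ is independent of the chosen path; by construction it is exactly the Gaussian parity $p^G(a)$, defined as the mod-$2$ intersection number of such a path with the other double lines. Together with $2\delta = 0$ this gives $p(a)=p^G(a)\cdot\delta$.

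The main obstacle is the path-construction: one must route the path so that each encounter with a double line is genuinely a sheet-transition (where Corollary~\ref{cor:delta_double_line} applies and contributes $\delta$) rather than a within-sheet traversal (which by Lemma~\ref{lem:parity_potential} would contribute $p(b)$ and produce a circular dependence). Equivalently, one must verify that the count of sheet-transitions coincides modulo $2$ with $p^G(a)$ on the nose, with no off-by-one coming from the sectors at the endpoints. Virtualization invariance is used throughout to rearrange local configurations so that the needed sheet-transitions can be realized.
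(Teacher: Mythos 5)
Your overall strategy --- writing $p(a)$ as a potential via Lemma~\ref{lem:parity_potential} and then decomposing that potential along a path by additivity, with each double-line encounter contributing $\delta$ and the number of encounters reducing to $p^G(a)$ mod $2$ --- is the same as the paper's. But the central identification is stated backwards, and as written the construction fails. The configuration covered by Corollary~\ref{cor:delta_double_line} (two points ``separated by a double line'', potential $\delta$) is that of two points on the \emph{same} sheet, on the two sides of the double line; in the Gauss diagram they are the two sides of one of the paired curves $b_\pm$. Two points lying on the two \emph{different} sheets meeting along $b$ are exactly the pairs of Lemma~\ref{lem:parity_potential}, with potential $p(b)$. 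You can check this against the proof of Corollary~\ref{cor:delta_double_line}, which computes $\delta_{x,z}$ by composing two of the Lemma's $p(a)$-pairs plus the correction $\delta_{\bar z,z}=\delta$, or against the Gaussian parity itself, where a short push across $b$ within one sheet crosses exactly one curve of the Gauss diagram and so has potential $1=\delta$. Hence a genuine sheet-transition contributes the unknown $p(b)$ --- precisely the circularity you are trying to avoid --- while a within-sheet traversal contributes $\delta$. Your plan to route the path so that \emph{every} encounter is a sheet-transition is therefore exactly the wrong routing, and the count of sheet-transitions has no reason to equal $p^G(a)$: by definition $p^G(a)$ is the mod-$2$ number of intersections of a path in the Gauss diagram on $S^2$ with the curves, i.e.\ the number of \emph{same-sheet} crossings of double lines in $D$.

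The repair is the paper's argument: take $\gamma$ in the Gauss diagram from $a_+$ to $a_-$ with the orientation normalization from the definition of $p^G$, so the path never changes sheets; then $p(a)=\delta_{x,y}$ for points $x,y\in\gamma$ near its ends (both indices unbarred, so the endpoint correction $\delta_{\bar z,z}=\delta$ you worry about does not arise), the function $f(z)=\delta_{z,y}$ jumps by $\delta$ at each curve crossing by Corollary~\ref{cor:delta_double_line}, and the number of crossings is congruent to $p^G(a)$ mod $2$ by definition, giving $p(a)=p^G(a)\cdot\delta$ since $2\delta=0$. Two smaller points: your vanishing argument for within-face arcs is off, since the double line created by $\mathcal{R}_1$ is a closed circle and does not end in cusps, so Proposition~\ref{prop:cusp_parity} does not apply; constancy of the potential as the endpoint moves inside a face follows from the local correspondence property applied to isotopic positions of the $\mathcal{R}_1$ bubble (or via the pinch property). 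Finally, once the path is taken in the Gauss diagram, the appeal to $H_1(S^2;\Z_2)=0$ is unnecessary: you only need one path realizing $p^G(a)$, as path-independence is already built into the definition of the Gaussian parity.
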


\begin{proof}[Proof of Theorem~\ref{thm:main_theorem}]

Let $p$ be a parity with coefficients $A$ on diagrams of free $2$-knots (or virtual $2$-knots up to virtualization). We will show that for the element $\delta\in A$ defined above, for any double line $a\in \mathcal D(D)$ we have $p(a)=p^G(a)\cdot\delta$. Then the homomorphism $\rho\colon \Z_2\to A$ form Gaussian parity to the parity $p$ is $\rho(x)=x\cdot\delta$.

  Let $a\in \mathcal D(D)$ be a double line in a diagram $D$. In the corresponding Gauss diagram the line $a$ is presented by a pair of curves $a_+,a_-$. Choose a path $\gamma$ that connects $a_+$ with $a_-$ such that the tangent vectors $\dot\gamma$, $\dot a_+$ form positively oriented basis at the starting point of $\gamma$, and $\dot\gamma$, $\dot a_-$ are positively oriented at the final point of $\gamma$ (Fig.~\ref{fig:gaussian_2parity}).

  Choose a point $y\in\gamma$ near the start of $\gamma$, and $x\in\gamma$ near the end of $\gamma$. By Lemma~\ref{lem:parity_potential} $p_D(a)=\delta_{x,y}$ (see Fig.~\ref{fig:parity_poten}). For any regular point $z\in\gamma$ denote $f(z)=\delta_{z,y}\in A$. Then $p_D(a)=f(x)$. On the other hand, $f(y)=\delta_{y,y}=0$, and by Corollary~\ref{cor:delta_double_line} while moving along $\gamma$ from $y$ to $x$ the value $f(z)$ changes by $\delta$ whenever the point $z$ crosses a double line. Hence, $\delta_{x,y}=\delta\cdot k$ where $k$ is the number of intersection points of $\gamma$ with double line. Then $k\equiv p^G(a) \mod 2$, thus, $p_D(a)=p^G(a)\cdot\delta$.
\end{proof}

\section{Oriented parity}\label{sec:oriented_parity}

In~\cite{N} a definition of oriented parity was introduced. Oriented parities include parities with coefficients and in general are not subjected to the relation $2p(a)=0$. Here we give a definition for $2$-knots.

\begin{definition}
Let $\mathcal K$ be a virtual $2$-knot. A {\em oriented parity} with coefficients in an abelian group $A$ on the diagrams of a $2$-knot $\mathcal K$ is a family of maps $p_D\colon \mathcal D(D)\to A$, where $\mathcal D(D)$ is the set of (oriented) double lines of an abstract diagram $D$ of the knot $\mathcal K$, which obeys the following properties.
\begin{enumerate}
\item ({\em local correspondence property}) let diagrams $D$ and $D'$ be connected by a Roseman move. If double lines $a\in\mathcal D(D)$ and $a'\in\mathcal D(D')$ contain arcs which do not affected by the move and correspond to each other then $p_D(a)=p_{D'}(a')$.
\item ({\em oriented triple point property}) let double lines $a,b,c\in\mathcal D(D)$ intersect in a triple point of the diagram $D$ (Fig.~\ref{fig:oriented_triple_property}). Then $$\epsilon(a)p_D(a)+\epsilon(b)p_D(b)+\epsilon(b)p_D(c)=0$$
    where $\epsilon(a)=or(a,b)or(a,c)$, $\epsilon(b)=or(b,a)or(b,c)$, $\epsilon(c)=or(c,a)or(c,b)$ and $or(a,b)$ is the orientation of the basis formed by the double lines $a$ and $b$ at the intersection point in the sheet $\gamma$.
\item ({\em pinch property}) the parity the double line $a$ of any pinch is zero: $p_D(a)=0$. A {\em pinch} is the pair of two curves which appears after a Roseman move $\mathcal R_3$.
\end{enumerate}

\begin{figure}[h]
\centering\includegraphics[width=0.8\textwidth]{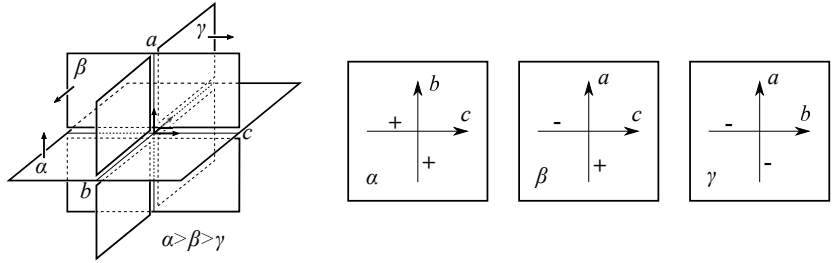}
\caption{Oriented triple point property}\label{fig:oriented_triple_property}
\end{figure}
\end{definition}

\begin{remark}
\begin{enumerate}
\item the parity $p_D(a)$ of a double line $a$ is bound to the orientation of $a$. This means that if one admits a crossing switching operation (for example when $p$ is a parity on flat or free knot) then a crossing switch will change the sign of the parity $p_D(a)$ of the double line the switch is applied to because the move changes the orientation of the double line. Virtualizations (if they are admissible) do not change the parity of double lines because they do not change the orientations. The oriented triple point property is compatible with crossing switch and virtualization operations.

\item if a triple point is geometrical (see Remark~\ref{rem:geometrical_triple_point}) then the oriented triple point property can be written as $$p_D(a)-p_D(b)+p_D(c)=0$$ where $b$ is the intersection of the upmost sheet and the lowest sheet.

    For example, in the Fig.~\ref{fig:oriented_triple_property} we have $or(a,b)=-1$, $or(a,c)=-1$, $or(b,c)=-1$, hence, $\epsilon(a)=(-1)\cdot(-1)=1$, $\epsilon(b)=1\cdot(-1)=-01$, $\epsilon(c)=1\cdot1=1$. Thus, the oriented triple point property is $p_D(a)-p_D(b)+p_D(c)=0$.
\end{enumerate}
\end{remark}

\begin{example}[Index parity]

Let $a$ be a double line in a diagram $D$. Connect the positive curve $a_+$ and the negative curve $a_-$ in the Gauss diagram with a path $\gamma$ such that $\gamma$ and $a_\pm$ form positively oriented bases at the endpoints of $\gamma$. Take the algebraic sum of the intersections of $\gamma$ with double lines: to any intersection point of $\gamma$ with a double line $b$ with the sign $\epsilon$ we assign the number $\epsilon\cdot or(\gamma,b)$ where $or(\gamma,b)$ is the orientation of the basis $(\gamma,b)$ on the sphere. The sum of these numbers is the index parity $ip_D(a)$ of the double line $a$.

\begin{figure}[h]
\centering\includegraphics[width=0.3\textwidth]{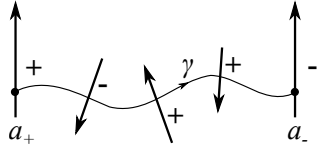}
\caption{Double point index as oriented Gaussian parity}\label{fig:gauss_2parity_or}
\end{figure}

For example, the index parity of the double line in Fig.~\ref{fig:gauss_2parity_or} is $ip_D(a)=(-1)\cdot(-1)+1\cdot 1+1\cdot(-1)=1$.

\begin{figure}[h]
\centering\includegraphics[width=0.7\textwidth]{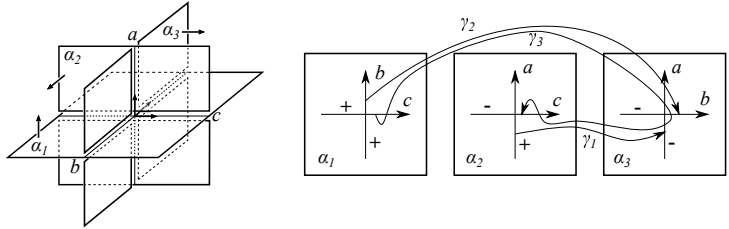}
\caption{Oriented triple point property for the index parity}\label{fig:oriented_triple_property_ex}
\end{figure}

Let us show that index parity obeys the oriented triple point property. Consider three double lines $a,b,c$ which intersects in a triple point (Fig.~\ref{fig:oriented_triple_property_ex}). Connect the preimages of the double line in the Gauss diagram with paths $\gamma_1$, $\gamma_2$ and $\gamma_3$ as shown. Then $\gamma_3$ is homotopic to $\gamma_2\circ \gamma_1^{-1}$. The index parity $ip_D(a)$ is the algebraic sum of intersections on  $\gamma_1$, and $ip_D(b)$ is the algebraic sum of intersections on $\gamma_2$. Then
\begin{multline*}
ip_D(c)=1\cdot(-1)+ip_D(b)+(-1)\cdot 1+(-1)\cdot(-1) - ip_D(a)+(-1)\cdot(-1)=\\ ip_D(b)-ip_D(a).
\end{multline*}

Thus, $ip_D(a)-ip_D(b)+ip_D(c)=0$.

Non geometric triple points can be checked analogously.
\end{example}


\end{document}